\newcommand{\WaldCat}{\mathbf{WaldCat}}
\newcommand{\Cat}{\mathbf{Cat}}
\newcommand{\timeses}{\times\cdots\times}
\renewcommand{\smash}{\wedge}
\newcommand{\smashes}{\wedge\cdots\wedge}
\newcommand{\defeq}{\stackrel{\mathrm{def}}{=}}
\newcommand{\A}{\mathcal{A}}
\newcommand{\B}{\mathcal{B}}
\newcommand{\I}{\mathcal{I}}
\renewcommand{\dot}{{{\raisebox{.2ex}{\scalebox{.3}{$\bullet$}}}}}
\newcommand{\Ar}{\mathrm{Ar}}
\newcommand{\ev}{\mathrm{ev}}
\newcommand{\N}{\mathbf{N}}
\newcommand{\V}{\mathcal{V}}
\renewcommand{\M}{\mathcal{M}}
\newcommand{\uM}{\underline{\mathcal{M}}}
\newcommand{\uWaldCat}{\underline{\mathbf{WaldCat}}}
\newcommand{\<}{\langle}
\renewcommand{\>}{\rangle}
\DeclareMathOperator{\coeq}{coeq}
\DeclareMathOperator{\dom}{dom}
\newtheorem*{lemmapushout}{Lemma \ref{lem:pushout}}
\begin{document}
\title[WaldCat is closed multicategory]{The category of Waldhausen categories is a closed multicategory}
\author{Inna Zakharevich}
\maketitle

\tableofcontents

\section*{Introduction}

The goal of this paper is to develop in detail an example of a closed
multicategory.  The literature on closed multicategories has very few examples;
in this paper we aim to explain a potentially-useful example in enough detail
that both the example and the general theory are easier to understand.  This
paper was written because the details were necessary to the future work of
Angelica Osorno and Anna Marie Bohmann, but we hope that it will be useful for
others as well.  Very little in this paper is new, and it is particularly
indebted to \cite{blumbergmandell11} for many of the ideas.



This paper proves the following theorem:
\begin{theorem}
  The category $\WaldCat$ of Waldhausen categories is a closed symmetric
  multicategory, in the sense that the hom-sets
  \[\WaldCat_k(A_1,\ldots,A_k;B)\]
  all have the structure of Waldhausen categories and composition of morphisms
  is multiexact.  In addition, there is a multiexact
  \[\ev:\C_1\timeses \C_k\times \WaldCat_k(\C_1,\ldots,\C_k;\D) \rto \D\]
  defined by 
  \[(A_1,\ldots,A_k,F) \rgoesto F(A_1,\ldots,A_k).\]
\end{theorem}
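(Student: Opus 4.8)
The plan is to establish the multicategory structure in stages, building up from the definition of the internal hom. First I would define, for Waldhausen categories $\C_1,\ldots,\C_k,\D$, the category $\WaldCat_k(\C_1,\ldots,\C_k;\D)$ to have objects the multiexact functors $F\colon\C_1\timeses\C_k\to\D$ and morphisms the natural transformations; I then need to declare which morphisms are cofibrations and which diagrams are (multi)exact. The natural choice, following \cite{blumbergmandell11}, is that a natural transformation $\eta\colon F\Rightarrow G$ is a cofibration if each component $\eta_{(A_1,\ldots,A_k)}$ is a cofibration in $\D$ \emph{and} for every tuple of cofibrations $A_i\rightarrowtail A_i'$ the evident pushout-corner map is again a cofibration; weak equivalences are defined componentwise. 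The bulk of the verification is then showing these choices actually give a Waldhausen category: the zero object is the constant functor at $0$, pushouts are computed pointwise (one must check the pointwise pushout of multiexact functors is again multiexact, using that pushouts commute with pushouts), and the cofibration/weak-equivalence axioms are inherited pointwise, with the pushout-corner condition being exactly what is needed to make cobase change along cofibrations of functors land back in the subcategory of cofibrations.

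Next I would address composition. Given $F_i\in\WaldCat_{k_i}(\C_{i,1},\ldots,\C_{i,k_i};\D_i)$ and $G\in\WaldCat_k(\D_1,\ldots,\D_k;\E)$, the composite $G\circ(F_1,\ldots,F_k)$ is the obvious functor; the content is that composition, viewed as a functor
\[
\WaldCat_k(\D_1,\ldots,\D_k;\E)\times\prod_i\WaldCat_{k_i}(\vec\C_i;\D_i)\rto\WaldCat_{\sum k_i}(\vec\C;\E),
\]
is itself multiexact. Unwinding the definitions, this reduces to two facts: that a multiexact functor preserves the pushout-corner cofibrations appearing in the definition of the hom-Waldhausen-category (so that post- and pre-composition by exact functors preserves cofibrations of natural transformations), and that it preserves the relevant pushouts and weak equivalences — but these are precisely the defining properties of multiexactness together with the pointwise computation of colimits established in the previous step. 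Associativity and unitality of this composition are inherited strictly from $\Cat$, and the symmetric group actions permuting the inputs are visibly compatible, giving the symmetric multicategory structure.

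Finally, for the evaluation map I would check directly that
\[
\ev\colon\C_1\timeses\C_k\times\WaldCat_k(\C_1,\ldots,\C_k;\D)\rto\D,\qquad(A_1,\ldots,A_k,F)\rgoesto F(A_1,\ldots,A_k),
\]
is multiexact. Separate exactness in the $\C_i$ variables is immediate since each object $F$ is itself multiexact; the remaining point is exactness in the functor variable, i.e.\ that $\ev$ sends a cofibration $\eta\colon F\rightarrowtail G$ (with $A_i$ held fixed, or more generally a cofibration in the product category) to a cofibration in $\D$ — which is exactly the componentwise half of the definition of cofibration in the hom-category — and that it preserves the pushouts, which are pointwise. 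The closedness, namely that $\WaldCat_k(\C_1,\ldots,\C_k;\D)$ is right adjoint to $-\times\C_1\timeses\C_k$ in the appropriate multicategorical sense, then follows formally from the universal property of $\ev$: a multiexact functor out of a product factors through the hom-category by currying, and one checks the curried functor is again multiexact using — once more — the pointwise formulas for colimits.

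I expect the main obstacle to be the first stage: verifying that $\WaldCat_k(\C_1,\ldots,\C_k;\D)$ with the pushout-corner cofibrations is genuinely a Waldhausen category, and in particular that this subcategory of cofibrations is closed under cobase change. This is where one must prove the stability lemma — that pushing out a pushout-corner cofibration of functors along another cofibration of functors yields a pushout-corner cofibration — which is a diagram chase with iterated pushouts (a cube-lemma style argument) and is the technical heart of the paper; I anticipate it will be isolated as \lemmapushout and used repeatedly in the composition and closedness arguments.
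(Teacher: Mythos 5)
Your plan follows the paper's architecture almost exactly: define $\uWaldCat(\C_1,\ldots,\C_k;\D)$ with multiexact functors as objects, levelwise weak equivalences, and cofibrations characterized by a pushout-corner (southern arrow) condition; then prove it is Waldhausen, check composition, and get closedness via the evaluation functor and currying. You also correctly pinpoint where the real work lies: showing that these pushout-corner cofibrations are stable under cobase change (the paper's Proposition~\ref{prop:good-pushout}).

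The gap is that you do not prove that stability claim, and the remark that it is ``a diagram chase with iterated pushouts'' considerably undersells what is needed. The paper does not merely chase a single square: it develops a genuine theory of good $n$-cubes, with the recursive requirement that \emph{all} faces of all dimensions be good and have cofibrant southern arrows, and it needs the colimit-decomposition result (Lemma~\ref{lem:pushout}, proved separately in the appendix) to compute southern arrows of $n$-cubes as iterated pushouts in the first place, plus existence lemmas (Lemmas~\ref{lem:semi-existence} and~\ref{lem:semi-existence2}) just to know the relevant colimits exist before asking whether they are cofibrations. Your phrase ``pushouts commute with pushouts'' is doing a lot of silent work: the actual proof of Proposition~\ref{prop:good-pushout} is an induction on cube dimension using Lemma~\ref{lem:case1} as the base case and the colimit decomposition at every stage, and it is several pages of the paper. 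You anticipate this as ``\lemmapushout,'' but that lemma is only the supporting colimit-bookkeeping; the pushout-stability result is a separate proposition built on top of it.

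Two smaller points. First, your definition of cofibration in the hom-category — components are cofibrations and the top-dimensional pushout-corner map is a cofibration — is only equivalent to the paper's ``the cube $([\bar f]_F \Rto^\alpha [\bar f]_G)$ is good'' condition once you have something like the paper's Lemma~\ref{lem:restricted-3}, which reduces goodness of all faces to the southern-arrow condition for all tuples of cofibrations including the degenerate ones $0\rcofib A$; without that reduction you would need to state the recursive face condition explicitly. Second, for composition of multiexact functors you gesture at preservation of the relevant pushouts, but the paper needs the specific identification (Lemma~\ref{lem:composition-good}) that the southern arrow of $[\bar f]_{F\circ(G_1,\ldots,G_k)}$ agrees with the southern arrow of $[\bar g]_F$ where $g_i$ is the southern arrow of the inner cube — this again rests on Lemma~\ref{lem:pushout}. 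So the proposal correctly identifies the shape of the argument but leaves the substantive technical core, which is most of Sections~3 and the appendix of the paper, as an acknowledged but unaddressed obstacle.
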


The organization of this paper is as follows.  In Section~1 we introduce
multicategories and closed multicategories and in Section~2 we introduce
Waldhausen categories.  Sections~3 and 4 discuss $k$-exactness of functors.  The
definition of the hom-Waldhausen categories is given in Section~5, and the
analysis of the $K$-theory functor is in Section~6.

\section{A quick introduction to symmetric multicategories}

A multicategory is a generalization of a symmetric monoidal category where one
does not necessarily have a product.  The motivation for the definition comes
from the notion of tensor product: the tensor product of modules classifies
bilinear maps out of the ordinary product of the modules.  Thus if one is in a
context where the tensor product is difficult to work with directly, one can
work with bilinear maps instead.  The idea of a multicategory is that we have a
notion of ``$k$-linear'' map, but we do not necessarily have a representing
object, so we must always work with the ``$k$-linear'' maps directly.

More formally, we have the following definition \cite{elmendorfmandell}:
\begin{definition}
  A \textsl{symmetric multicategory} $\M$ is given by the following data:
  \begin{itemize}
  \item[-] A collection of objects $\ob\M$.
  \item[-] For each $k\geq 0$ and $k+1$-tuple of objects $A_1,\ldots,A_k,B\in
    \ob\M$, a set $\M_k(A_1,\ldots,A_k;B)$ of \textsl{$k$-morphisms}.
  \item[-] A right action of $\Sigma_k$ on the collection of all $k$-morphisms such
    that for $\sigma\in \Sigma_k$,
    \[\sigma^*:\M_k(A_1,\ldots,A_k;B) \rto
    \M_k(A_{\sigma(1)},\ldots,A_{\sigma(k)};B).\] 
  \item[-] A distinguished \textsl{unit} $1_A\in \M_1(A;A)$ for every $A\in \ob\M$,
    and 
  \item[-] A \textsl{composition law} 
    \[\begin{array}{l}
      \circ: \M_\ell(B_1,\ldots,B_\ell;C) \times \prod_{i=1}^\ell
      M_{k_i}(A_{i1},\ldots,A_{ik_i}; B_i) \\ \qquad\rto \M_{\sum
        k_i}(A_{11},\ldots,A_{\ell k_\ell};C).
    \end{array}\]
  \end{itemize}
  subject to compatibility axioms listed in \cite[p5-6]{elmendorfmandell}.  We
  do not state them here as we will need to restate them in the enriched setting
  momentarily. 
\end{definition}

Any symmetric monoidal category is a symmetric multicategory, by setting 
\[\M_k(A_1,\ldots,A_k;B) = \M(A_1\otimes\cdots\otimes A_k,B).\]
Thus a symmetric multicategory is the ``next best thing'' to a symmetric
monoidal category in many cases.  Many proofs that work in a symmetric monoidal
category where the product is defined via an appropriate universal property will
also work in a symmetric multicategory.

Quite often one wants more than just a symmetric monoidal category structure:
one also wants the symmetric monoidal category to be closed, so that there are
hom-objects defined in the category itself.  In other words, we want $\M$ to be
enriched over itself \cite[Section~1.6]{kelly82} (in a way compatible with the
symmetric monoidal structure).  We thus have the following definition, where we
will write $\bar A = (A_1,\ldots,A_k)$ in the interest of compactness.

\begin{definition} \lbl{def:enrichedmulti} A symmetric multicategory $\M$ is
  called \textsl{closed} if for all $k+1$-tuples $A_1,\ldots,A_k,B$ of objects
  in $\M$ there exists an object $\uM(\bar A;B)\in \M$ with a right
  $\Sigma_k$-action called the \textsl{internal hom-object} and an
  \textsl{evaluation morphism}
  \[\ev_{\bar A;B} \in
  \M_{k+1}(\bar A,\uM(\bar A;B);B).\] These need to satisfy the following
  axioms:

  \begin{itemize}
  \item[(CM1)] for all $\ell$-tuples $C_1,\ldots,C_\ell\in \ob\M$ there is a
    bijection
    \[
      \varphi_{\bar A;\bar C;B}:\M_{\ell}(\bar C; \uM(\bar A;B))   \rto
      \M_{k+\ell}(\bar A,\bar C; B)\]
    defined by sending  $f\in
    \M_\ell(\bar C;\uM(\bar A;B))$ to the composite 
    \[\ev_{\bar A;B} \circ (1_{A_1},\ldots,1_{A_k},f).\]
  \item[(CM2)]  This bijection is $\Sigma_k\times \Sigma_\ell$-equivariant, in
    the sense that the following diagram commutes for all $(\sigma,\tau)\in
    \Sigma_k\times \Sigma_\ell$:
    \begin{diagram}[4em]
      {\M_\ell(\bar C;\uM(\bar A;B)) & \M_{k+\ell}(\bar A,\bar C; B) \\
        \M_\ell(\bar C_\tau; \uM(\bar A_\sigma;B)) & \M_{k+\ell}(\bar
        A_\sigma,\bar C_\tau;B)\\}; \arrowsquare{\varphi_{\bar A;\bar
          C;B}}{f\mapsto \sigma \circ (f\cdot \tau)}{g\mapsto g\cdot (\sigma\times \tau)
        }{\varphi_{\bar A_\sigma;\bar C_\tau;B}}
    \end{diagram}
    Here $\bar A_\sigma = (A_{\sigma(1)},\ldots,A_{\sigma(k)})$ and $\bar C_\tau =
    (C_{\tau(1)},\ldots,C_{\tau(\ell)})$.
  \end{itemize}
  
\end{definition}
For more details, see \cite[Section 3]{manzyuk12}; for a more detailed theory of
enriched categories, see \cite{leinster04}.

For example, if $\M$ is a closed symmetric monoidal category then we can give it
the structure of a closed symmetric multicategory by setting
$\uM(A_1,\ldots,A_k;B) = B^{A_1\otimes\cdots\otimes A_k}$.  Note that if $\M$
is a closed symmetric multicategory then we can think of it as a category
enriched over itself.

\section{A bit about Waldhausen categories}

We begin by recalling the definition of a Waldhausen category.  These were first
introduced by Waldhausen in \cite{waldhausen83}, where they are called
``categories with cofibrations and weak equivalences.''

\begin{definition}
  A \textsl{Waldhausen category} is a category $\C$ together with two
  subcategories, $c\C$ and $w\C$, of cofibrations and weak equivalences,
  satisfying the following extra axioms:
  \begin{itemize}
  \item[(W1)] All isomorphisms are both weak equivalences and cofibrations.
  \item[(W2)] If two of $f,g,gf$ are weak equivalences, so is the third.
  \item[(W3)] $\C$ has a zero object, and the morphism $0\rto A$ is a cofibration
    for all $A\in \C$.
  \item[(W4)] Every diagram $C \lto A \rcofib B\in \C$ has a pushout, and the
    morphism $C \rcofib B\cup_A C$ is a cofibration.
  \item[(W5)] Given any diagram
    \begin{diagram}
      {B & A & C \\ B' & A' & C'\\};
      \cofib{1-2}{1-3} \cofib{2-2}{2-3}
      \to{1-2}{1-1} \to{2-2}{2-1}
      \we{1-1}{2-1} \we{1-2}{2-2} \we{1-3}{2-3}
    \end{diagram}
    the induced morphism $B\cup_A C \rwe B'\cup_{A'} C'$ is a weak equivalence.
  \end{itemize}
  A functor $F:\C\rto \D$ between Waldhausen categories $\C$ and $\D$ is
  \textsl{$1$-exact} if it preserves weak equivalences, cofibrations, and pushouts
  of the form described in (W4).
\end{definition}

Before we move on to a very quick overview of the $S_\dot$ construction for a
Waldhausen category, we introduce a couple of technical definitions which will
be of great use to us in the upcoming discussion.

\begin{definition}
  Let $\I$ be the category with two objects $0$ and $1$ and one non-invertible
  morphism $0\rto 1$.  Suppose that we are given two functors $F,G:\C\rto \D$
  and a natural transformation $\alpha:F\rto G$.  We write $(F\Rto^\alpha G)$ for
  the functor $\C\times\I\rto \D$ given by
  \[\makeshort{(F\Rto^\alpha G)}(A) = \begin{cases} F(A) \caseif A\in \C\times \{0\} \\ G(A)
    \caseif A\in \C\times \{1\}\end{cases}\]
  and
  \[\makeshort{(F\Rto^\alpha G)(f)}
  = \begin{cases}
    F(f) \caseif f\in \C\times\{0\} \\ G(f) \caseif f\in \C\times\{1\} \\
    \alpha_A \caseif f:\makeshort{(A,0) \rto (A,1)} \end{cases}\]
  Note that the existence of such a functor is equivalent to the existence of
  the natural transformation $\alpha$.
\end{definition}

\begin{definition}
  An \textsl{$n$-cube} in $\C$ is a functor $I:\I^n \rto \C$; a \textsl{face} of
  a cube is a restriction $I\big|_{\I^k\times\{\epsilon\}\times\I^{n-k-1}}$ for
  $\epsilon=0,1$.  We will write the objects of $\I^n$ as vectors $\bar \epsilon
  = (\epsilon_1,\ldots,\epsilon_n)$.  As shorthand, we will write
  $I_{k\epsilon}$ for the restriction $I\big|_{\I^{k-1}\times\{\epsilon\}\times
    \I^{n-k}}$, for $\epsilon=0,1$, and $I(1)$ for $I(1,\ldots,1)$.  For any
  cube $I$ we write $I'$ for the diagram $I\big|_{\bar\epsilon \neq
    (1,\ldots,1)}$, and define the \textsl{southern arrow} of an $n$-cube $I$
  to be the morphism
  \[\colim I' \rto I(1).\] (The southern arrow may not
  exist if the colimit does not.)  An $n$-cube $I$ is \textsl{good} if its
  southern arrow is a cofibration and all of its faces are good.  
\end{definition}

In particular, the $0$-cubes are the objects of $\C$, and the southern arrow of
a $0$-cube $A$ is just the morphism $\initial \rto A$, so all $0$-cubes are
good.  The $1$-cubes are the morphisms of $\C$, and the southern arrow of a
$1$-cube is itself.  Thus the good $1$-cubes are the cofibrations.

The notion of a good cube appears in \cite[Definition~2.1]{blumbergmandell11}
as a ``cubically cofibrant'' diagram.  

Given a natural transformation $\alpha:I \rto J$ between $n$-cubes,
we will write $[\alpha]$ for the $n+1$-cube $(I\Rto^\alpha J)$.

Let $\< n\>$ be the ordered set $0<1<\cdots<n$ and let $\Ar\< n\>$ be the arrow
category of $\< n\>$; we will denote an object in $\Ar\< n\>$ as $j<i$.  For a
vector $\vec n = (n_1,\ldots,n_m)$ we will write $\<\vec n\> = \< n_1\>\timeses
\< n_m\>$.

\begin{definition}
  The category $S_n\C$ is defined to have as objects functors $X:\Ar\<
  n\> \rto \C$ satisfying the extra conditions
  \begin{enumerate}
  \item $X(i=i) = *$ for all $i\in \< n \>$, and
  \item $X(i<j) \rto X(i<k)$ is a cofibration, and 
  \item for all $i<j<k$ the square
    \begin{diagram} {X(i<j) & X(i<k) \\ X(j=j) & X(j<k)\\}; 
      \cofib{1-1}{1-2} \cofib{2-1}{2-2}
      \to{1-1}{2-1} \to{1-2}{2-2}
    \end{diagram}
    is a pushout square.
  \end{enumerate}
  The categories $S_n\C$ form a simplicial category by inheriting the simplicial
  structure from the simplicial category $[\Ar \<\dot\>, \C]$.

  $S_n\C$ is defined to be a Waldhausen category by setting the weak
  equivalences to be levelwise, and the cofibrations to be the natural
  transformations $\alpha:X\rto Y$ such that for all $i<j$ the square
  \begin{diagram}
    {X(i) & X(j) \\ Y(i) & Y(j) \\}; \cofib{1-1}{1-2} \cofib{2-1}{2-2}
    \to{1-1}{2-1} \to{1-2}{2-2}
  \end{diagram}
  is good.
\end{definition}

As applying $S_\dot$ to a Waldhausen category produces a simplicial Waldhausen
category we can iterate the construction.  It is not very difficult to see that
the $k$-fold iterated construction $S^{(k)}_\dot\C$ has objects which are functors 
\[X\colon\Ar(\< n_1\>\timeses \< n_k\>) \rto \C,\] such that
for every $k$-cube $I:\I^k \rto \Ar\< \vec n \>$, the $k$-cube $X\circ
I$ is good, and has as cofibrations the natural transformations $\alpha:X\rto Y$
such that the $k+1$-cube $(X\circ I \Rto^\alpha Y\circ I)$ is good.

\begin{definition} \lbl{def:K}
  Let $\Sp$ be the category of symmetric spectra.  The functor $K:\WaldCat \rto
  \Sp$ is defined by taking a Waldhausen category $\C$ to the symmetric spectrum
  \[(|w\C|, |wS_\dot\C|, |wS^{(2)}_\dot\C|, \ldots).\]
\end{definition}

For more details, see \cite{waldhausen83}; for more on an all-at-once
construction of $K(\C)$, see \cite[Section 2]{blumbergmandell11}.

\begin{definition}
  A functor $F:\C\times\D \rto \E$ of Waldhausen categories is \textsl{biexact}
  if the following conditions hold.
  \begin{enumerate}
  \item For any object $A\in \C$, $F(A,-)$ is exact; for any object $B\in \D$,
    $F(-,B)$ is exact.
  \item For any two cofibrations $f:A \rcofib A'\in \C$ and $g:B\rcofib B'\in \D$,
    the southern arrow of the square
    \begin{diagram}[3.5em]
      {F(A,B) & F(A',B) \\ F(A,B') & F(A',B') \\};
      \arrowsquare{F(f,1_B)}{F(1_A,g)}{F(1_{A'},g)}{F(f,1_{B'})}
    \end{diagram}
    is a cofibration.
  \end{enumerate}
\end{definition}
The definition of biexact functor is meant to be analogous to the definition of
bilinear map.  If $\C$, $\D$ and $\E$ are all equal then this should correspond
to a product structure on $K(\C)$.  In an ideal situation, $\WaldCat$ would have
a monoidal structure $\otimes$ representing biexact functors, and all we would
need to show is that $K$ is symmetric monoidal.  Unfortunately, this cannot
happen:
\begin{proposition}
  There does not exist a symmetric monoidal product $\otimes$ on $\WaldCat$ such
  that 
  \[\left\{
    \begin{array}{c}
      \hbox{exact functors} \\ \C\otimes \D \rto \E
    \end{array}
  \right\} \bothto \left\{
    \begin{array}{c}
      \hbox{biexact functors} \\ \C\times \D \rto \E
    \end{array}
  \right\}.\]
\end{proposition}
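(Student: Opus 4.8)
The plan is to argue by contradiction: suppose such a symmetric monoidal product $\otimes$ exists. Any such $\otimes$ is automatically closed, with internal hom the hom-Waldhausen category $\uWaldCat(\D;\E)$ of the main theorem: composing the representing bijection with the bijection (CM1) gives $\WaldCat(\C\otimes\D,\E)\cong\WaldCat(\C,\uWaldCat(\D;\E))$, so $-\otimes\D$ is left adjoint to $\uWaldCat(\D;-)$ and in particular preserves all colimits that exist in $\WaldCat$. One should also record that in $\WaldCat$ finite products agree with finite coproducts: an exact functor $\C\times\D\to\E$ is determined by its restrictions to $\C$ and $\D$, since every object $(A,B)$ is the pushout of $(A,0)\leftarrow(0,0)\to(0,B)$, and conversely a pair $(G,H)$ of exact functors is realized by $(A,B)\mapsto G(A)\vee H(B)$. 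Hence $-\otimes\D$ also preserves finite products; in particular $\C\otimes *\cong *$ for the trivial one-object Waldhausen category $*$. (This is also direct: since exact functors preserve the zero object, asking that $F(A,-)\colon *\to\E$ be exact forces $F(A,0)=0$ for all $A$, so the only biexact functor $\C\times *\to\E$ is the zero functor, and $*$ is the zero object of $\WaldCat$.)

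I would then identify the monoidal unit. Let $\mathbf{Fin}_*$ be the Waldhausen category of finite pointed sets, with cofibrations the injections and weak equivalences the isomorphisms; it is the free Waldhausen category on one object, in the sense that an exact functor $\mathbf{Fin}_*\to\E$ is uniquely of the form $S\mapsto S\wedge A$ (the wedge of $|S|-1$ copies of $A$) with $A=F(S^0)$, and every $A$ occurs. The claim is that biexact functors $\mathbf{Fin}_*\times\C\to\E$ correspond bijectively to exact functors $\C\to\E$: given biexact $F$, the functor $F(-,A)$ is exact and hence equal to $-\wedge F(S^0,A)$, while $F(S^0,-)$ is exact and hence commutes with the finite wedges occurring in $S\wedge A$, so $F(S,A)=S\wedge F(S^0,A)=F(S^0,S\wedge A)$; this identity propagates to morphisms, so $F$ is recovered from $\bar F=F(S^0,-)$, and conversely $(S,A)\mapsto S\wedge\bar F(A)$ is biexact. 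Thus $\mathbf{Fin}_*\otimes\C\cong\C$ naturally in $\C$, so the monoidal unit must be $\mathbf{Fin}_*$, and the left unitor, read off as a biexact functor $\mathbf{Fin}_*\times\C\to\C$, is $(S,A)\mapsto S\wedge A$.

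The remaining step --- extracting a contradiction from this structure --- is the hard part. The relevant feature to exploit is biexactness condition (2): for the universal biexact functor $u\colon\C\times\D\to\C\otimes\D$, condition (1) forces $u(0,B)=u(A,0)=0$ and forces $u$ to be cocontinuous in each variable separately, so $u$ does \emph{not} preserve the mixed pushouts of $\C\times\D$ --- for instance $u(A\vee A',B\vee B')$ expands as a four-fold wedge of the objects $u(A^{(i)},B^{(j)})$, whereas $u(A,B)\cup_{u(0,0)}u(A',B')$ is only the two-fold wedge --- while condition (2) simultaneously requires every southern arrow $u(A',B)\cup_{u(A,B)}u(A,B')\to u(A',B')$ to be a cofibration of $\C\otimes\D$. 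I expect the contradiction to come from choosing a small Waldhausen category $\C$ with a genuinely two-dimensional cofibration structure --- built from a single nontrivial cofibration together with the zero object and pushouts that (W1)--(W5) force --- and exhibiting two biexact functors out of $\C\times\C$ that any representing object $\C\otimes\C$ would be obliged to distinguish but, by the constraints above, cannot; equivalently, the functor $\E\mapsto\{\text{biexact functors }\C\times\C\to\E\}$ is not representable. The delicate point is the choice of $\C$: it must be small enough for an explicit analysis of its cofibrations yet rigid enough that the bookkeeping genuinely breaks --- the free Waldhausen categories (finite coproducts of $\mathbf{Fin}_*$) will not serve, since for those the tensor product does exist, simply by adjoining generators.
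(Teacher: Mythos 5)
Your proposal does not prove the proposition: you identify the would-be unit as $\mathbf{Fin}_*$ (the paper's $\N_*$), observe that finite products and coproducts in $\WaldCat$ coincide, and note that $\otimes$ would be closed with internal hom $\uWaldCat$, but then you explicitly stop, writing that extracting the actual contradiction ``is the hard part'' and only speculating about what might work. That speculation also points in the wrong direction. You suggest the contradiction should come from ``a small Waldhausen category $\C$ with a genuinely two-dimensional cofibration structure,'' and you explicitly dismiss the free Waldhausen categories on the grounds that for those ``the tensor product does exist, simply by adjoining generators.'' But the paper's contradiction lives precisely where you say it cannot: it comes entirely from the interaction of the unit $\mathcal{S}$ with $\N_*\subseteq\FinSet_*$, with no auxiliary two-dimensional category needed.

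The mechanism the paper uses is a rigidity mismatch, not a two-dimensionality obstruction. If $\mathcal{S}$ is the unit, then $\mathcal{S}\otimes\N_*\cong\N_*$, so a biexact $F\colon\mathcal{S}\times\N_*\to\FinSet_*$ must factor (under the purported natural bijection) through an exact functor $\N_*\to\FinSet_*$, which is determined by the images $A_n$ and the constraint $|A_n| = n|A_1|$. But $\N_*$ has at most one object of each cardinality, so whatever biexact $u\colon\mathcal{S}\times\N_*\to\N_*$ implements the bijection is forced to send, e.g., $(\iota(\underline{1})\amalg\iota(\underline{1}),\underline{1})$ and $(\iota(\underline{1}),\underline{1}\amalg\underline{1})$ to the \emph{same} object of $\N_*$, since biexactness gives both image the same cardinality; hence every $F$ in the image of $\hat F\mapsto\hat F\circ u$ must take the same value on those two inputs. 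Yet $\FinSet_*$ has many distinct objects of each cardinality, and one can build biexact functors $\mathcal{S}\times\N_*\to\FinSet_*$ whose values at those two inputs are different (but isomorphic) finite sets, since the two partial-application exact functors $F(-,\underline{1})$ and $F(\iota(\underline{1}),-)$ can be chosen independently subject only to agreeing at $(\iota(\underline{1}),\underline{1})$. That is the contradiction, and it is considerably more elementary than the construction you were hoping for. Your preliminary identification of the unit is correct and consistent with the paper, but without the counting/rigidity step there is no proof, and your closing heuristic about what kind of $\C$ is needed would, if followed, lead away from the argument that actually works.
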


This result is well-known to experts, but as we could not find a reference for
it we present a proof here.

\begin{proof}
  Let $\N_*$ be the full subcategory of $\FinSet_*$ with objects the finite
  pointed sets $\underbar{n} \defeq \{*,1,\ldots,n\}$ for $n\geq 0$; note that
  $\N_*$ is equivalent to $\FinSet_*$.  As all Waldhausen categories contain all
  binary coproducts, any Waldhausen category contains $\N_*$ as a Waldhausen
  subcategory.  Suppose that such a symmetric monoidal structure on $\WaldCat$
  exists, and let $\mathcal{S}$ be the unit.  Then by assumption,
  $\mathcal{S}\otimes \N_* \cong \N_*$ and the set of exact functors $\N_* \rto
  \FinSet_*$ is given by a choice of sets $(A_1,\ldots,A_n,\ldots)$ such that
  $|A_n| = n|A_1|$.

  Let $\iota: \N_* \rto \mathcal{S}$ be any inclusion of $\N_*$ as a subcategory
  of $\mathcal{S}$, and let $F:\mathcal{S}\times \N_* \rto \FinSet_*$ be any
  biexact functor. By assumption, $F$ must be uniquely determined by a choice of
  sets $(A_1,\ldots)$.  However, unlike in $\N_*$, in $\mathcal{S}\times \N_*$ there are
  multiple objects whose image under $F$ must have the same cardinality; for
  example,
  \[\big|F(\iota(\underbar{1}) \amalg \iota(\underbar{1}), \underbar{1})\big| =
  \big| F(\iota(\underbar{1}), \underbar{1}\amalg \underbar{1}) \big|.\]
  Thus a single set $A_n$ cannot determine the value of $F$, and we see that
  such a bijection cannot exist.
\end{proof}

We want to show that even though $\WaldCat$ does not have a symmetric monoidal
structure, it does have the next best thing: a symmetric multicategory structure
where the $2$-ary morphisms are exactly the biexact functors.

\section{Cubes}

In this section we develop some technical aspects of the theory of cubes.  The
category $\C$ will always be assumed to be a Waldhausen category.

The general idea of this section is that good $n$-cubes should behave like
objects in a Waldhausen category, and that cofibrations between them should
correspond to good $n+1$-cubes.  More formally, we have the following
proposition, which is designed to be a higher-dimensional analog of Axiom~(W4).

\begin{proposition} \lbl{prop:good-pushout}
  Let $I$, $J$ and $K$ be good $n$-cubes in $\C$, and suppose that $\alpha:I
  \Rto J$ is a natural transformation.  If $[\alpha]$ is a good $n+1$-cube then
  the diagram
  \[K \Lto I \Rto^\alpha J\] has a pushout $J\cup_IK$, and the natural
  transformation $\beta: K \Rto J\cup_I K$ gives a good $n+1$-cube $[\beta]$.
\end{proposition}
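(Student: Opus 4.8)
The plan is to prove this by induction on $n$, using the inductive structure built into the definition of ``good'': a cube is good iff its southern arrow is a cofibration and all its faces are good. The base case $n=0$ is precisely Axiom~(W4): a $0$-cube is an object, the natural transformation $\alpha$ is just a morphism $I\rto J$, and $[\alpha]$ good means $I\rto J$ is a cofibration; then $K\cup_I J$ exists and $K\rcofib K\cup_I J$ is a cofibration, which is exactly the assertion that $[\beta]$ is a good $1$-cube.

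For the inductive step, I would first construct the pushout $J\cup_I K$ itself. The natural transformations $\alpha\colon I\Rto J$ restricted to faces give natural transformations between the faces $I_{k\epsilon}, J_{k\epsilon}$, and $[\alpha]$'s faces are exactly the $[\alpha_{k\epsilon}]$ together with the original cubes $I,J$; since $[\alpha]$ is good all of these are good, so by the inductive hypothesis each face $J_{k\epsilon}\cup_{I_{k\epsilon}} K_{k\epsilon}$ exists. These fit together to define $J\cup_I K$ as an $n$-cube objectwise (pushouts computed at each vertex), and one checks it is the pushout in the functor category. Then I must verify two things: (a) that $J\cup_I K$ is a good $n$-cube, and (b) that $[\beta]$ is a good $n+1$-cube. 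For (a), each face of $J\cup_I K$ is $J_{k\epsilon}\cup_{I_{k\epsilon}} K_{k\epsilon}$, which is good by induction, so it remains only to check the southern arrow $\colim (J\cup_I K)' \rto (J\cup_I K)(1)$ is a cofibration. The key computational point is that colimit commutes with pushout, so $\colim(J\cup_I K)' = \colim J' \cup_{\colim I'} \colim K'$, and likewise at the terminal vertex; then the southern arrow of $J\cup_I K$ is obtained from the southern arrows of $I, J, K$ by the pushout construction, and Axiom~(W4) (applied to the square of southern arrows, using that $[\alpha]$ good forces the relevant map to be a cofibration) shows it is a cofibration.

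For (b), I unwind what it means for $[\beta]$ to be good. Its faces in the first $n$ directions are the cubes $[\beta_{k\epsilon}]$, which are good by the inductive hypothesis applied to the faces; its two faces in the last direction are $K$ and $J\cup_I K$, both good (the latter by part (a)). So the only remaining thing is that the southern arrow of $[\beta]$ is a cofibration. Here I again use that colimit commutes with pushout to identify $\colim [\beta]'$, and I expect the southern arrow of $[\beta]$ to be expressible as a pushout-corner map built from the southern arrow of $[\alpha]$ (a cofibration, by hypothesis) and the southern arrows of the $n$-cubes involved; the conclusion then follows from (W4) and the gluing lemma for cofibrations.

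The main obstacle I anticipate is part (b): carefully identifying the southern arrow of the $(n+1)$-cube $[\beta]$ and showing it is a cofibration. This requires a clean bookkeeping lemma describing how $\colim$ of the punctured $(n+1)$-cube $[\beta]'$ decomposes in terms of the data of $I, J, K$ and $\alpha$ — essentially a statement that the ``southern arrow'' operation is compatible with the $[\,\cdot\,]$ construction and with pushouts. I would isolate this as a separate lemma (of the flavor of the ``Lemma~\ref{lem:pushout}'' already reserved in the preamble), prove it once by a colimit-interchange argument, and then feed it into both (a) and (b). Everything else is a routine diagram chase invoking (W4) and (W5).
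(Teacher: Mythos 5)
Your plan is sound and runs on the same engine as the paper's proof: induction on $n$, reduction (after disposing of the faces by induction) to showing that the two relevant southern arrows are cofibrations, and a colimit--interchange lemma (the paper's Lemma~\ref{lem:pushout}) to identify those southern arrows in terms of lower-dimensional data. The one place where you genuinely diverge is the order in which you handle the two southern arrows, and the mechanism used for the one of $J\cup_I K$. You do $J\cup_I K$ first, applying the $n=1$ case to the ``square of southern arrows''
\[
\begin{array}{ccccc}
\colim K' & \longleftarrow & \colim I' & \rightarrowtail & \colim J'\\
\downarrow & & \downarrow & & \downarrow\\
K(1) & \longleftarrow & I(1) & \rightarrowtail & J(1)
\end{array}
\]
whose right-hand square is good precisely because its southern arrow \emph{is} the southern arrow of $[\alpha]$ and its two horizontal edges are cofibrations by Lemma~\ref{lem:induced-cofib}. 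Note that for this step you need Lemma~\ref{lem:case1} (Waldhausen's Lemma~1.1.1), not merely Axiom~(W4): (W4) only gives cobase change of a single cofibration, whereas here you need the comparison map between two pushouts. The paper instead handles $[\beta]$ first, observing via Lemma~\ref{lem:pushout} that $\colim[\beta]'\cong K(1)\cup_{I(1)}\colim[\alpha]'$, so the southern arrow of $[\beta]$ is a plain cobase change of that of $[\alpha]$ (so here (W4) really is all that is needed, not a ``pushout-corner map'' argument as you anticipate), and then deduces the southern arrow of $J\cup_I K$ for free by factoring it through the southern arrow of $[\beta]$, the connecting map being a cobase change of $\colim K'\rightarrowtail K(1)$. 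Both orderings work; the paper's saves a little effort on the $J\cup_I K$ step, while yours is slightly more symmetric. Either way you will also want to cite Lemmas~\ref{lem:semi-existence} and~\ref{lem:semi-existence2} (or prove their content) to guarantee the colimits and southern arrows you manipulate actually exist before you compare them.
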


Note that as $J\cup_I K$ is a face of $[\beta]$ it must be good as well.

The proof of this proposition is quite long, so to aid understanding we first
spend some time developing a general theory of cubes.  The first result we
mention is the $n=1$ case of the proposition, which is proved as lemma 1.1.1 in
\cite{waldhausen83}:

\begin{lemma} \lbl{lem:case1}
  Consider the diagram
  \begin{diagram}
    {C & A & B \\ C'& A'& B'\\};
    \to{1-2}{1-1} \cofib{1-2}{1-3}
    \cofib{1-1}{2-1} \cofib{1-2}{2-2} \cofib{1-3}{2-3}
    \to{2-2}{2-1} \cofib{2-2}{2-3}
  \end{diagram}
  in $\C$.  If the right-hand square is good (as a $2$-cube) then the induced
  morphism $B\cup_A C \rcofib B'\cup_{A'}C'$ is a cofibration.
\end{lemma}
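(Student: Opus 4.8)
The plan is to factor the induced map as
\[B\cup_A C\rto B\cup_A C'\rto B'\cup_{A'}C'\]
and to exhibit each of the two arrows as the cobase change of a cofibration, so that each of them — and hence the composite, which is the map of the statement by uniqueness of induced maps on pushouts — is a cofibration by (W4). I would first record that every pushout below exists: $B\cup_A C$ and $B\cup_A C'$ exist because $A\rcofib B$ is a cofibration, $P\defeq B\cup_A A'$ exists for the same reason, and $B'\cup_{A'}C'$ exists because $A'\rcofib B'$ is a cofibration (it is a face of the good right-hand square). Throughout one uses that $A\to C'$ factors both as $A\to C\to C'$ and as $A\to A'\to C'$, which is just the commutativity of the left-hand square.

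For the first arrow, pasting of pushout squares along $A\to C\to C'$ identifies $(B\cup_A C)\cup_C C'$ with $B\cup_A C'$, so $B\cup_A C\rto B\cup_A C'$ is the cobase change of $C\rcofib C'$ along $C\to B\cup_A C$ and is therefore a cofibration. For the second arrow, pasting along $A\to A'\to C'$ identifies $P\cup_{A'}C'$ with $B\cup_A C'$; and pasting the square built from the southern arrow $P\rto B'$ below the pushout square defining $P\cup_{A'}C'$ identifies $B'\cup_P(P\cup_{A'}C')$ with $B'\cup_{A'}C'$, using that the composite $A'\to P\to B'$ is the structure map $A'\rcofib B'$, which is built into the definition of the southern arrow. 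Hence $B\cup_A C'\rto B'\cup_{A'}C'$ is the cobase change of $P\rto B'$ along $P\rto P\cup_{A'}C'$. Since the right-hand square is good, its southern arrow $P\rto B'$ is a cofibration, so this arrow is too, and composing the two finishes the proof.

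The one step that needs genuine care is the second: correctly assembling the two iterated-pushout identifications and checking that the arrow $B\cup_A C'\rto B'\cup_{A'}C'$ obtained as a cobase change really is the canonical induced map on pushouts. Everything else is a mechanical combination of (W4) with the pasting lemma for pushout squares. This argument is Waldhausen's proof of Lemma~1.1.1 of \cite{waldhausen83}, which we reproduce here for completeness.
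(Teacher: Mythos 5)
Your proof is correct. The paper itself gives no proof of this lemma --- it simply cites Waldhausen's Lemma 1.1.1 --- and your argument is indeed the one in \cite{waldhausen83}: factor the induced map through $B\cup_A C'$, identify the first leg as the cobase change of the cofibration $C\rcofib C'$ (via pasting along $A\to C\to C'$), identify the second leg as the cobase change of the southern arrow $B\cup_A A'\rto B'$ (via pasting first along $A\to A'\to C'$ to get $B\cup_A C'\cong (B\cup_A A')\cup_{A'}C'$, then cogluing along $B\cup_A A'\rto B'$), and conclude by (W4).
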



The next couple of lemmas are general category-theoretic observations whose
proofs are simple, but which we will need several times in the forthcoming
proofs.

\begin{lemma}  \lbl{lem:colim-comp}
  If $\D$ is any category with a terminal object $*$ and
  $F:\A\times \D \rto \C$ is a functor, then
  \[\colim F \cong \colim F\big|_{\A\times \{*\}}.\]
\end{lemma}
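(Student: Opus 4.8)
The plan is to exhibit the inclusion $\iota\colon\A\times\{*\}\hookrightarrow\A\times\D$ as a \emph{final} functor, i.e.\ one along which restriction does not change colimits; the stated isomorphism $\colim F\cong\colim F\big|_{\A\times\{*\}}$ is then immediate, and, as we will use later, one side exists precisely when the other does. Concretely one applies the standard criterion: a functor $G\colon\mathcal I\rto\mathcal J$ is final if and only if for every object $j\in\mathcal J$ the comma category $j/G$ (whose objects are morphisms $j\rto Gi$) is nonempty and connected; when this holds, $\colim_{\mathcal J}F\cong\colim_{\mathcal I}F\circ G$ for every $F$.

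So I would fix $(A,d)\in\A\times\D$ and analyze the comma category $(A,d)/\iota$. An object is a morphism $(A,d)\rto(A',*)$ in $\A\times\D$, that is, a pair consisting of a morphism $A\rto A'$ in $\A$ together with a morphism $d\rto *$ in $\D$; but the latter is unique because $*$ is terminal, so the data reduces to a single morphism $A\rto A'$. Likewise a morphism in $(A,d)/\iota$ is just a morphism of $\A$ making the evident triangle commute, the $\D$-component being forced. Hence $(A,d)/\iota$ is isomorphic to the coslice category $A/\A$, which has $1_A$ as initial object and is therefore nonempty and connected. This proves $\iota$ is final, and the lemma follows.

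A more hands-on variant, avoiding an appeal to the finality theorem, is to check directly that the category of cocones under $F$ is isomorphic to the category of cocones under $F\big|_{\A\times\{*\}}$: given a cocone $\lambda$ under $F$ with vertex $X$, the components $\lambda_{(A,*)}$ form a cocone under the restriction; conversely, given a cocone $\mu$ under the restriction, one sets $\lambda_{(A,d)}=\mu_A\circ F(1_A,t_d)$, where $t_d\colon d\rto *$ is the unique map, and then uses naturality of the terminal maps together with functoriality of $F$ to verify that $\lambda$ is a cocone and that the two assignments are mutually inverse and natural in $X$. Either way the argument is purely formal, so there is no genuine obstacle; the only point requiring care is the bookkeeping that the terminal object of $\D$ makes every relevant morphism into $*$ unique, which is exactly what forces the two (co)cone descriptions — equivalently, the comma categories — to collapse as claimed.
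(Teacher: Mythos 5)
Your proof is correct and takes the same route as the paper, which simply asserts that $\A\times\{*\}\rto\A\times\D$ is cofinal; you supply the comma-category verification that the paper leaves implicit, plus an equivalent direct cocone argument.
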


\begin{proof}
  This follows because the functor $\A\times\{*\} \rto \A\times\D$ is cofinal.
\end{proof}

Many of our proofs rely on computing southern arrows of cubes; luckily, it turns
out that these can be deduced from simple pushouts.  The following lemma is an
$n$-dimensional generalizataion of a special case of the rigid dual of
Proposition~0.2 in \cite{goodwilliecalcii}; we state it here as we will be using
it several times in this section.  We defer the proof until
Appendix~\ref{app:technical}.

\begin{lemma} \lbl{lem:pushout}
  Suppose that we are given an indexing category $\A$ along with $n$
  subcategories $\A_1,\ldots,\A_n$ such that $\A = \bigcup_{i=1}^n \A_i$.  Let
  $F:\A \rightarrow \C$.  Then the southern arrow of the cube $I$ given by
  \[I(\bar \epsilon) =
  \begin{cases}
    \colim F\big|_{\bigcap_{\epsilon_i= 0} \A_i} & \bar \epsilon \neq 
    (1,\ldots,1) \\ 
    \colim F & \bar \epsilon = (1,\ldots,1)
  \end{cases}\]
  is an isomorphism.
\end{lemma}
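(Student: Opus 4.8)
The plan is to compute $\colim I'$ directly and show it has the universal property of $\colim F$. First I would rewrite the cube more uniformly: putting $S(\bar\epsilon)=\{\,i : \epsilon_i=0\,\}$ and $\A_S=\bigcap_{i\in S}\A_i$ with the convention $\A_\emptyset=\A$, the cube $I$ is simply $\bar\epsilon\mapsto\colim F|_{\A_{S(\bar\epsilon)}}$ for \emph{every} $\bar\epsilon$, with the structure map $I(\bar\epsilon)\to I(\bar\epsilon')$ induced by the inclusion $\A_{S(\bar\epsilon)}\subseteq\A_{S(\bar\epsilon')}$ available whenever $\bar\epsilon\le\bar\epsilon'$ (equivalently $S(\bar\epsilon)\supseteq S(\bar\epsilon')$); all the colimits in sight exist because $I$ is assumed to be a genuine cube. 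In these terms the southern arrow is the canonical map $\colim I'\to I(1,\ldots,1)=\colim F$, and we must show it is an isomorphism.

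Fix a test object $Z$. Using the defining property of each inner colimit, a cocone under $I'$ with vertex $Z$ unwinds into the following data: for each \emph{nonempty} $S\subseteq\{1,\ldots,n\}$ and each $a\in\A_S$ a morphism $\eta_{S,a}\colon F(a)\to Z$, subject to (i) for every fixed $S$ the family $\{\eta_{S,a}\}_a$ is a cocone under $F|_{\A_S}$, and (ii) $\eta_{S',a}=\eta_{S,a}$ whenever $S'\subseteq S$ and $a\in\A_S$ --- condition (ii) being just the naturality of the original cocone along the cube directions, rephrased via the colimit adjunction. The crucial point, and the only place the covering hypothesis $\A=\bigcup_i\A_i$ is used, is that (ii) forces $\eta_{S,a}$ to be independent of $S$: for a fixed $a$ the set $T(a)=\{\,i : a\in\A_i\,\}$ is nonempty, so for $i,j\in T(a)$ condition (ii) applied to $\{i\}\subseteq\{i,j\}$ and to $\{j\}\subseteq\{i,j\}$ gives $\eta_{\{i\},a}=\eta_{\{i,j\},a}=\eta_{\{j\},a}$, and for any nonempty $S$ with $i\in S$ it gives $\eta_{S,a}=\eta_{\{i\},a}$. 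Writing $\eta_a$ for this common value, condition (i) for the singletons $S=\{i\}$ then says exactly that $\eta_{\mathrm{cod}(g)}\circ F(g)=\eta_{\mathrm{dom}(g)}$ for every morphism $g$ lying in some $\A_i$, i.e.\ for every morphism of $\A$ --- using the covering hypothesis a second time --- and (i) for larger $S$ imposes nothing new. Hence a cocone under $I'$ with vertex $Z$ is the same thing as a cocone under $F$ with vertex $Z$, visibly naturally in $Z$, so $\colim I'$ exists and is canonically isomorphic to $\colim F$. Finally, tracing the universal cocone on $\colim F$ (with components the structure maps $F(a)\to\colim F$) backwards through this correspondence identifies the resulting isomorphism $\colim I'\to\colim F$ with the southern arrow, which is therefore invertible.

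The only genuine work is the unwinding in the second paragraph --- in particular, extracting from (ii) the fact that $\eta_{S,a}$ depends on $a$ alone; the rewriting of the cube and the remaining juggling of colimit adjunctions are routine. Conceptually this last step says that the projection to $\A$ from the Grothendieck construction of $\bar\epsilon\mapsto\A_{S(\bar\epsilon)}$ over the punctured cube is a final functor, so one could equivalently package the argument as ``$\colim I'$ is an iterated colimit, hence by interchange of colimits the colimit of $F$ along that projection, and the projection is final''; the elementary route above merely performs this finality check inline.
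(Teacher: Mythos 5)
Your proof is correct but takes a genuinely different route from the paper's. The paper argues by induction on $n$: for $n=2$ it rewrites $\colim F$ as a coequalizer of coproducts indexed over objects and morphisms of $\A$, splits these coproducts according to $\A_1$ and $\A_2$, and interchanges colimits to exhibit the desired pushout square; the inductive step then applies the $n=2$ case to $I'$ together with the $n-1$ case for $\A' = \bigcup_{i<n}\A_i$. You instead verify the universal property directly for all $n$ at once: you unwind a cocone under $I'$ into the data $(\eta_{S,a})$ and observe that the compatibility condition (ii) plus the covering hypothesis forces $\eta_{S,a}$ to depend only on $a$, after which (i) for singletons plus the covering hypothesis (used a second time, now on morphisms) says exactly that $(\eta_a)$ is a cocone under $F$. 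This is tighter: it avoids the induction and the coequalizer bookkeeping entirely, isolates precisely where the covering hypothesis enters (twice, once for objects and once for morphisms), and your closing remark correctly identifies the conceptual content --- finality of the projection from the Grothendieck construction --- which the paper leaves implicit. The paper's version has the modest advantage of staying in the "pushout square" register that matches how the lemma is deployed later (e.g.\ the displayed pushout square following the lemma's statement), whereas yours requires the reader to re-extract those pushout squares as special cases. Both are complete; be sure, as you do, to note that $\colim I'$ exists because $\colim F = I(1,\ldots,1)$ is given as part of the data of the cube, and that the resulting isomorphism really is the southern arrow by tracing the universal cocone on $\colim F$ back through the correspondence.
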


The special case that we will use the most often is the following.  Let $I:\I^n
\rto \C$ be an $n$-cube.  Set $\A = \I^n \backslash \{(1,\ldots,1)\}$, $\A_1 =
\A \backslash \{(1,\ldots,1,0)\}$, and $\A_2 = \I^{n-1}\times\{0\}$.  Then 
\begin{eqnarray*}
  \colim I|_{\A_1} &=& \colim (I_{n1})' \\
  \colim I|_{\A_2} &\cong& I(1,\ldots,1,0) \\ 
  \colim I|_{\A_1\cap \A_2} &=& \colim (I_{n0})'.
\end{eqnarray*}
Applying
Lemma~\ref{lem:pushout} we have a pushout square \label{eq:pushout}
\begin{diagram} 
  {\colim {}(I_{n0})' & I(1,\ldots,1,0) \\ \colim {}(I_{n1})' & \colim I'\\};
  \arrowsquare{}{}{}{}
\end{diagram}
which will allow us to compute the southern arrow of $I$ using pushouts and
induction. 

We now turn to the existence of southern arrows.

\begin{lemma} \lbl{lem:semi-existence}  \lbl{lem:induced-cofib}
  Let $I$ be a good $n$-cube in a Waldhausen
  category $\C$.  Then $\colim I_{n0}' \rto \colim I_{n1}'$ is a cofibration,
  and the southern arrow of $I$ exists.
\end{lemma}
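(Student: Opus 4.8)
The plan is to induct on $n$, using the pushout square displayed in \eqref{eq:pushout} to reduce dimension. The base case $n=1$ is immediate: a good $1$-cube is a cofibration, $I_{10}'$ and $I_{11}'$ are $0$-cubes (objects), their colimits are those objects, the map between them is the cofibration itself, and the southern arrow of a $1$-cube is the $1$-cube, which exists. For the inductive step, assume the lemma holds for all good cubes of dimension less than $n$ and let $I:\I^n\to\C$ be good. The key observation is that $I_{n0}$ and $I_{n1}$ are themselves good $(n-1)$-cubes (they are faces of $I$), so by the inductive hypothesis both have southern arrows; in particular $\colim (I_{n0})'$ and $\colim(I_{n1})'$ exist. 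Moreover, for each of these $(n-1)$-cubes, applying the inductive hypothesis again to their faces (and iterating) shows that all the relevant smaller colimits along subcategories of $\I^{n-1}\setminus\{(1,\dots,1)\}$ exist, so the object $I(1,\dots,1,0)$ (which by Lemma~\ref{lem:colim-comp} equals $\colim I|_{\A_2}$ for $\A_2 = \I^{n-1}\times\{0\}$) fits into the square \eqref{eq:pushout}.

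Next I would show the top horizontal map $\colim(I_{n0})'\to I(1,\dots,1,0)$ is a cofibration. This is exactly the southern arrow of the good $(n-1)$-cube $I_{n0}$ — wait, more precisely: the square \eqref{eq:pushout} has top edge $\colim(I_{n0})'\to I(1,\dots,1,0)$, and since $I_{n0}$ is the face of $I$ on $\I^{n-1}\times\{0\}$ with ``terminal vertex'' $I(1,\dots,1,0)$, this top edge is the southern arrow of $I_{n0}$, which is a cofibration because $I_{n0}$ is good. Now the pushout square \eqref{eq:pushout} exhibits $\colim I'$ as the pushout of $\colim(I_{n1})' \leftarrow \colim(I_{n0})' \rightarrow I(1,\dots,1,0)$ along a cofibration; by Axiom~(W4) this pushout exists and the map $\colim(I_{n1})'\to\colim I'$ is a cofibration. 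But $\colim(I_{n1})'\to\colim I'$ is precisely the map $\colim(I_{n0})'\to\colim(I_{n1})'$ asked about in the statement — no, I need to be careful: the statement asks about $\colim I_{n0}'\to\colim I_{n1}'$, so let me instead argue that this latter map is a cofibration first, and then handle $\colim I'$. In fact the cleanest order is: (i) the map $\colim(I_{n0})'\to\colim(I_{n1})'$ is a cofibration because it is the southern arrow of the $n$-cube $[\gamma]=(I_{n0}\Rto^\gamma I_{n1})$ obtained from the natural transformation between the two faces, and this cube is good — it is isomorphic to $I$ itself via the identification $\I^{n-1}\times\I\cong\I^n$ reindexing the last coordinate, so its southern arrow existing and being a cofibration is part of what we want; (ii) then \eqref{eq:pushout} computes $\colim I'$ as a pushout along this cofibration, so $\colim I'$ exists and the southern arrow of $I$, namely $\colim I'\to I(1,\dots,1)$, exists as a morphism of $\C$.

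The main obstacle, and the point requiring the most care, is the circularity worry in step (i): identifying ``$\colim(I_{n0})'\to\colim(I_{n1})'$ is a cofibration'' with a southern-arrow statement without secretly assuming the conclusion of the lemma in dimension $n$. The resolution is that this map is the southern arrow of the $n$-cube $I$ under a coordinate permutation, but we do not yet know $I$'s southern arrow is a cofibration — that is the goal. So instead I would prove directly that $\colim(I_{n0})'\to\colim(I_{n1})'$ is a cofibration by a separate induction: write $I_{n1}' = (I_{n0}\Rto I_{n1})$ restricted appropriately and unwind that $\colim(I_{n1})'$ is built from $\colim(I_{n0})'$ by a sequence of pushouts along cofibrations coming from the good faces of $I$, invoking Lemma~\ref{lem:case1} at the bottom dimension and Lemma~\ref{lem:pushout} with a suitable covering of the indexing category $\A = \I^{n-1}\times\{0,1\}\setminus\{(1,\dots,1)\}$ by the two pieces $\I^{n-1}\times\{0\}$ and $\I^{n-1}\times\{0,1\}\setminus\{(1,\dots,1,1),(1,\dots,1,0)\}$... hence reducing to the goodness of strictly smaller-dimensional faces, which the outer induction controls. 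Once $\colim(I_{n0})'\to\colim(I_{n1})'$ is known to be a cofibration, Axiom~(W4) applied to \eqref{eq:pushout} finishes both claims at once. I expect the bookkeeping of which covering of $\A$ to use in each invocation of Lemma~\ref{lem:pushout}, and verifying that every cube appearing is a face of $I$ (hence good), to be the only genuinely delicate part; the rest is a routine induction.
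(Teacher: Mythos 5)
The central step of the lemma is showing that $\colim I_{n0}' \to \colim I_{n1}'$ is a cofibration, and this is exactly where your argument breaks down. Your step (i) asserts that this map ``is the southern arrow of the $n$-cube $[\gamma]=(I_{n0}\Rto^\gamma I_{n1})$'' and also ``the southern arrow of the $n$-cube $I$ under a coordinate permutation.'' Neither identification is correct: the southern arrow of $[\gamma]\cong I$ is the map $\colim I' \to I(1,\ldots,1)$ from the colimit of the punctured $n$-cube to the terminal vertex, whereas $\colim I_{n0}' \to \colim I_{n1}'$ is a map between two colimits of punctured $(n-1)$-cubes. These have entirely different targets and sources; no reindexing of $\I^n$ turns one into the other. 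Consequently the ``circularity worry'' you flag is a misdiagnosis --- the issue is not that you would be assuming the goal, but that the claimed identification is simply false. Your fallback ``separate induction'' sketch then reuses the decomposition of \eqref{eq:pushout}, which only exhibits $\colim I'$ as a pushout of $I(1,\ldots,1,0)\leftarrow\colim I_{n0}' \to \colim I_{n1}'$; it gives no factorization of $\colim I_{n0}' \to \colim I_{n1}'$ itself as a composite of pushouts along cofibrations, so the key claim remains unproved.

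What the paper actually does is prove the cofibration statement by its own induction on $n$: both $\colim I_{n0}'$ and $\colim I_{n1}'$ are rewritten (via Lemma~\ref{lem:pushout}) as pushouts of three-term diagrams, yielding a $2\times 3$ diagram whose three vertical maps go from the $I_{n0}$-row to the $I_{n1}$-row. The leftmost vertical is $\colim((I_{n0})_{(n-1)1})' \to \colim((I_{n1})_{(n-1)1})'$, which is a cofibration by the inductive hypothesis applied to the good $(n-1)$-cube $I_{(n-1)1}$; the right-hand square is a $2$-dimensional face of $I$ and hence good. Lemma~\ref{lem:case1} (Waldhausen's Lemma~1.1.1) then gives that the induced map of pushouts is a cofibration. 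The existence of $\colim I'$ (hence of the southern arrow) is then read off from \eqref{eq:pushout} exactly as in your step (ii), which is the one part of your argument that is correct as stated. To repair your proof you need to replace step (i) with the paper's double-pushout decomposition and reduction to Lemma~\ref{lem:case1}; the eq:pushout square alone does not suffice to establish the cofibration claim.
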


\begin{proof}
  We prove that $\colim I_{n0}' \rto \colim I_{n1}'$ is a cofibration by
  induction on $n$.  The cases $n=1,2$ follow directly from the definition of a
  good $n$-cube; the case $n=3$ is a special case of Lemma~\ref{lem:case1}.  Now
  suppose that this is true for all cubes of size less than $n$, and consider
  the situation for $n$-cubes.

  Let $A = I(0,\ldots,0)$, $B = I(0,\ldots,0,1)$, $X = I(1,\ldots,1,0,0)$ and $Y
  = I(1,\ldots,1,0,1)$.  Then (by Lemma~\ref{lem:pushout}) we know that 
  \[\colim I_{n0}' \cong\colim\Big(\colim ((I_{n0})_{(n-1)1})'
  \lto A \rcofib X\Big)\]
  and
  \[\colim I_{n1}' \cong\colim\Big((\colim (I_{n1})_{(n-1)1})'
  \lto B \rcofib Y\Big).\]
  Note that $(I_{n0})_{(n-1)1} = (I_{(n-1)1})_{(n-1)0}$ and $(I_{n1})_{(n-1)1} =
  (I_{(n-1)1})_{(n-1)1}$, so the inductive hypothesis applies to these and we
  get a diagram
  \begin{diagram}
    { \colim ((I_{n0})_{(n-1)1})' & A & X \\
      \colim ((I_{n1})_{(n-1)1})' & B & Y \\};
    \cofib{1-1}{2-1} \cofib{1-2}{2-2} \cofib{1-3}{2-3}
    \to{1-2.mid west}{1-1.mid east} \to{2-2.mid west}{2-1.mid east}
    \cofib{1-2.mid east}{1-3.mid west} \cofib{2-2.mid east}{2-3.mid west}
  \end{diagram}
  As $I$ is good the right-hand square is also good, and thus we see that
  Lemma~\ref{lem:case1} applies and the induced morphism between the pushouts is
  a cofibration, as desired.

  In order for the southern arrow to exist we need to show that $\colim I'$
  exists.  By Lemma~\ref{lem:pushout} we know that
  \begin{eqnarray*}
    \colim I' &\cong& \colim \big( \colim I_{n0} \lto \colim I'_{n0} \rto \colim
        I\big|_{\bar\epsilon \notin \{1\}^{n-1}\times\I}\big) \\
        &\cong& \colim \big(I_{n0}(1) \lto \colim I_{n0}'\rto \colim I_{n1}'\big),
  \end{eqnarray*}
  where $\colim I\big|_{\bar\epsilon \notin \{1\}^{n-1}\times\I} \cong \colim
  I'_{n1}$ by Lemma~\ref{lem:colim-comp}.  But by the first part of the proof
  the second morphism in the colimit is a cofibration, so the pushout exists.
\end{proof}

As a consequence we get the following:

\begin{lemma}  \label{lem:semi-existence2}
  In a pushout square of $n$-cubes 
  \begin{diagram}
    { I & J \\ K & J\cup_I K \\};
    \arrowsquare{\alpha}{}{}{\beta}
  \end{diagram}
  where $I$, $J$, $K$ and $[\alpha]$ are good, the southern arrows of
  $J\cup_IK$ and $[\beta]$ exist.
\end{lemma}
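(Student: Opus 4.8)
The plan is to produce both southern arrows by rewriting the relevant punctured colimits as iterated pushouts of objects already known to exist, combining the pushout formula of Lemma~\ref{lem:pushout} with Lemma~\ref{lem:semi-existence} applied to the good cubes $I$, $J$, $K$ and $[\alpha]$.

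First I would treat the $n$-cube $J\cup_I K$. Because colimits, and in particular pushouts, in a functor category are computed pointwise, the restriction $(J\cup_I K)'$ is the pushout $J'\cup_{I'}K'$ formed in the functor category on $\I^n\setminus\{(1,\ldots,1)\}$; and since forming $\colim$ is itself a colimit, once $\colim(J\cup_I K)'$ exists it must be the pushout $\colim J'\cup_{\colim I'}\colim K'$ in $\C$. By Lemma~\ref{lem:semi-existence} the colimits $\colim I'$, $\colim J'$, $\colim K'$ all exist, since $I$, $J$, $K$ are good; and applying that lemma to the good $n+1$-cube $[\alpha]$, whose restrictions in the last coordinate are precisely $I$ and $J$, shows that the gluing map $\colim I'\to\colim J'$ is a cofibration. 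Hence the pushout $\colim J'\cup_{\colim I'}\colim K'$ exists in $\C$ by (W4), so $\colim(J\cup_I K)'$ exists, and the southern arrow $\colim(J\cup_I K)'\to(J\cup_I K)(1)$ exists.

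Next I would treat the $n+1$-cube $[\beta]=(K\Rto^\beta J\cup_I K)$. Applying the special case of Lemma~\ref{lem:pushout} recorded just after its statement to $[\beta]$, split along its last coordinate (in which its restrictions are $K$ and $J\cup_I K$), identifies $\colim[\beta]'$ with the pushout
\[
  K(1)\;\cup_{\,\colim K'}\;\colim(J\cup_I K)',
\]
where the map $\colim K'\to K(1)$ is the southern arrow of $K$, a cofibration because $K$ is good, and $\colim(J\cup_I K)'$ exists by the previous paragraph. So this pushout exists in $\C$ by (W4); therefore $\colim[\beta]'$ exists, and the southern arrow of $[\beta]$, namely $\colim[\beta]'\to(J\cup_I K)(1)$, exists.

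The one step needing care is the first: one has to observe that taking a colimit commutes with the pointwise pushout of cubes and then --- since goodness of $J\cup_I K$ is exactly what we are not yet entitled to assume --- use goodness of $[\alpha]$, through Lemma~\ref{lem:semi-existence}, to see that the gluing map $\colim I'\to\colim J'$ is a cofibration, so that the resulting pushout really lies in the Waldhausen category $\C$. After that, everything is a direct application of Lemmas~\ref{lem:pushout} and~\ref{lem:semi-existence}.
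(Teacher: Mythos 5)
Your proof follows essentially the same route as the paper's own: you commute colimits to identify $\colim(J\cup_I K)'$ with the pushout $\colim J'\cup_{\colim I'}\colim K'$, use goodness of $[\alpha]$ via Lemma~\ref{lem:semi-existence} to see that $\colim I'\to\colim J'$ is a cofibration so that (W4) applies, and then run the special case of Lemma~\ref{lem:pushout} on $[\beta]$ split along its last coordinate, using goodness of $K$ to furnish the needed cofibration $\colim K'\rcofib K(1)$. This matches the paper's proof step for step; you are just slightly more explicit about the pointwise-pushout and colimit-commutation bookkeeping than the paper chooses to be.
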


\begin{proof}
  As colimits commute we know that
  \[\colim {} (J\cup_I K)' \cong \colim \Big(\colim K' \lto
  \colim I' \rcofib^i \colim J'\Big),\]
  where $i$ is a cofibration by Lemma~\ref{lem:induced-cofib} because
  $[\alpha]$ is good.  Each term in the pushout on the right exists because $I$,
  $J$ and $K$ are good, and the pushout itself exists because $i$ is a
  cofibration; thus the southern arrow of $J\cup_I K$ exists.

  As $K$ is good we know that its southern arrow is a cofibration, and by the
  special case of Lemma~\ref{lem:pushout} discussed on page~\pageref{eq:pushout}
  the southern arrow of $[\beta]$ exists if the southern arrow of
  $[\beta]_{(n+1)1}$ exists.  But $[\beta]_{(n+1)1}$ is exactly $J\cup_I K$
  which has a southern arrow, as desired.
\end{proof}



Now we are ready to prove Proposition~\ref{prop:good-pushout}.

\begin{proof}[Proof of Proposition~\ref{prop:good-pushout}]
  Note that $I$ gives us a natural transformation of functors from $I'$ to the
  constant functor at $I(1)$.
  
  We will prove the desired statement by induction on $n$.  Clearly for $n=0$
  we just need to show that cofibrations are preserved under pushout
  in $\C$, which we know is true because $\C$ is a Waldhausen category.  For
  $n=1$ this is just Lemma~\ref{lem:case1}.  Thus assume that $n>1$ and that we
  know that the proposition holds for all smaller dimensions.  By restricting to
  faces of $I$,$J$,$K$ and using the inductive hypothesis we see that any face
  of $[\beta]$ that is not equal to $K$ or $J\cup_I K$ is good.  $K$ is given to
  be good, so in order to prove the proposition it suffices to show that the
  southern arrows of $J\cup_I K$ and $[\beta]$ are cofibrations; we know that
  they exist by Lemmas~\ref{lem:semi-existence} and \ref{lem:semi-existence2}.
  
  The southern arrow of $\beta$ is the morphism
  \[j:\colim\Big(\makeshort{(J\cup_IK)' \Lto^\beta K'\Rto K(1)}\Big)  \rto (J\cup_IK)(1);\]
  we want to show that it is a cofibration.  Note that
  \[\begin{array}{l}
    \colim\begin{inline-diagram} {K' & K(1) \\ (J\cup_IK)'
        \\}; \To{1-1.mid east}{1-2.mid west} \To{1-1}{2-1}_\beta \end{inline-diagram} \cong 
    \colim \begin{inline-diagram} {I' & K ' & K(1) \\
        J' &  (J\cup_I K) ' \\}; \To{1-1.mid east}{1-2.mid west}
      \To{1-2.mid east}{1-3.mid west} \To{1-1}{2-1}_\alpha \To{1-2}{2-2}^\beta
      \To{2-1.mid east}{2-2.mid west}\end{inline-diagram} \\
    \qquad\cong \colim \big( J'\Lto^\alpha I' \Rto K(1) \big)
    \\ \qquad \cong \colim
    \big( J'\Lto^\alpha I'\Rto I(1) \rto K(1) \big)
  \end{array}\]
  where the second line follows because the square on the first line is a
  pushout square.  Therefore we have the folowing diagram, where the three
  commutative squares are all pushout squares:
  \begin{diagram}
    {I' & I(1) & K(1) \\
      J ' & \colim {} [\alpha]'  & K(1)\cup_{I(1)} \colim {} [\alpha]
\\
       & J(1) & (J\cup_IK)(1) \\};
    \To{1-1.mid east}{1-2.mid west} \to{1-2.mid east}{1-3.mid west} \to{1-2}{2-2}
    \To{1-1}{2-1}_\alpha \To{2-1.mid east}{2-2.mid west}
    \cofib{2-2}{3-2}^i \To{2-1}{3-2} \to{2-2.mid east}{2-3.mid west}
    \to{3-2.mid east}{3-3.mid west} \to{1-3}{2-3} \cofib{2-3}{3-3}^j
  \end{diagram}
  Since $[\alpha]$ is a good cube it follows that $i$ is a cofibration, and thus
  $j$ is a cofibration because it is a pushout of $i$.

  It now remains to show that the southern arrow of $J\cup_I K$ is a
  cofibration.  The southern arrow of $J\cup_I K$ factors through the
  southern arrow of $[\beta]$; thus it suffices to show that the connecting
  morphism is also a cofibration.  By Lemma~\ref{lem:pushout} we know that
  \begin{eqnarray*}
    \colim [\beta]'&\cong& \colim \big( \colim K \lto \colim K'\rto \colim {}
    [\beta]\big|_{\bar\epsilon \notin \{1\}^k\times \I} \big) \\
    &\cong& \colim \big( K(1) \lcofib \colim K' \rto \colim {} (J\cup_I K)'\big)
  \end{eqnarray*}
  where the left-hand map on the bottom is a cofibration because $K$ is a good
  cube, and $\colim[\beta]\big|_{\bar\epsilon \notin \{1\}^n\times\I} \cong
  \colim (J\cup_IK)'$ by Lemma~\ref{lem:colim-comp}.  The induced morphism
  $\colim (J\cup_IK)' \rto \colim [\beta]'$ is the right-hand map in the given
  pushout square.  Given that it is the pushout of the cofibration $\colim K'
  \rcofib K(1)$ we know that it is also a cofibration, as desired.  Thus $J\cup_I
  K$ is a good cube, and we are done.
\end{proof}

\section{$k$-exactness}

The goal of this section is to show that $\WaldCat$ is a symmetric
multicategory, where $\WaldCat(\C,\D)$ is the set of exact functors $\C \rto \D$
and $\WaldCat_k(\C_1,\ldots,\C_k;\D)$ is the set of $k$-exact functors.  Most of
this section is devoted to defining $k$-exactness and working out its
properties.

When $k$ is clear from context, we will write $\bar\C = \C_1\timeses \C_k$, and
refer to objects $\bar A = (A_1,\ldots,A_k)\in\bar \C$, and
$\bar f = (f_1,\ldots,f_k)\in \bar\C$.

\begin{definition}
  Given morphisms $f_i:A_{i0}\rto A_{i1}\in \C_i$ and a functor $F:\bar \C \rto
  \D$ we define a $k$-cube $[\bar f]_F$ in $\D$ by 
  \[[\bar f]_F(\epsilon_1,\ldots,\epsilon_k) =
  F(A_{1\epsilon_1},\ldots,A_{k\epsilon_k})\] 
  and 
  \[[\bar f]_F(\makeshort{\epsilon_1\rto \epsilon_1',\ldots,\epsilon_k\rto
    \epsilon_k'}) = F(h_1,\ldots,h_k),\] where $h_i = 1_{A_{i\epsilon_i}}$ if
  $\epsilon_i = \epsilon_i'$ and $h_i = f_i$ otherwise.  We define the
  \textsl{box product} of $\bar f$ to be the southern arrow of $[\bar f]_F$.  
\end{definition}

\begin{definition}
  A $0$-exact functor with codomain $\D$ is an object of $\D$.  A functor
  $F:\C_1\timeses \C_k \rto \D$ is \textsl{$k$-exact} if
  \begin{itemize}
  \item[(kE1)] $F(\bar A) = 0$ if $A_i = 0$ for any $1\leq i \leq k$,
  \item[(kE2)] $F$ preserves pushsouts in each variable independently,
  \item[(kE3)] $F(w\bar\C) \subseteq w\D$, and 
  \item[(kE4)] for all $\bar f\in c\bar \C$, $[\bar f]_F$ is good.
  \end{itemize}

  If a functor is $k$-exact for some $k$, we will call it \textsl{multiexact}.
\end{definition}

\begin{definition}
  Let $\WaldCat$ be the following symmetric multicategory:
  \begin{description}
  \item[objects] Waldhausen categories.
  \item[$k$-morphisms] $k$-exact functors $\C_1\timeses \C_k \rto \D$.
  \item[$\Sigma_k$-action] permuting the input variables.
  \end{description}
\end{definition}

To check that this is well-defined it suffices to check that multiexact functors
compose properly.  This is exactly the result of
Proposition~\ref{lem:k-composition}; however, before we can prove that we need
to develop a little bit of the theory of $k$-exact functors.  The first lemma we
prove shows that in order to prove Axiom~(kE4) it suffices to consider a single
type of southern arrow.

\begin{lemma} \lbl{lem:restricted-3} Let $F:\C_1\timeses \C_k \rto \D$ be any
  functor satisfying Axiom~(kE1).  If the southern arrow of $[\bar f]_F$ is a
  cofibration for all $\bar f\in c\bar\C$ then $F$ satisfies Axiom~(kE4).
\end{lemma}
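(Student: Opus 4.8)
The plan is to show that goodness of $[\bar f]_F$ for \emph{all} tuples of cofibrations follows from the special case where the southern arrow is a cofibration, by induction on $k$ (or, more precisely, by induction on the dimension of the faces). The key observation is that goodness of a $k$-cube is defined recursively: a cube is good iff its southern arrow is a cofibration \emph{and} all of its faces are good. So I want to reduce the ``all faces good'' clause to the southern-arrow clause applied in lower dimension.

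First I would set up the induction on $k$. The base case $k=0$ is immediate since every $0$-cube is good. For the inductive step, fix $\bar f = (f_1,\ldots,f_k)\in c\bar\C$ and consider the $k$-cube $[\bar f]_F$. By hypothesis its southern arrow is a cofibration, so it remains to check that every face of $[\bar f]_F$ is good. A face of $[\bar f]_F$ is obtained by fixing one coordinate $\epsilon_i$ to be $0$ or $1$; I claim each such face is itself of the form $[\bar g]_G$ for a suitable functor $G$ of $k-1$ variables satisfying (kE1), and a suitable tuple $\bar g\in c\bar\C'$. Concretely, if we fix the $i$th coordinate to $\epsilon$, the resulting $(k-1)$-cube is $[\bar f_{\hat\imath}]_{G}$ where $\bar f_{\hat\imath}$ omits $f_i$ and $G = F(-,\ldots,-,A_{i\epsilon},-,\ldots,-)$ is $F$ with the $i$th variable frozen at $A_{i\epsilon}$. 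Since $F$ satisfies (kE1), so does $G$ (freezing a variable at a non-zero object doesn't affect the condition; if $A_{i\epsilon}=0$ then $G$ is constant at $0$ and all its cubes are trivially good). Moreover $G$ inherits the hypothesis of the lemma: the southern arrow of $[\bar g]_G$ for any $\bar g\in c\bar\C'$ equals the southern arrow of a face of some $[\bar f']_F$ with $\bar f'\in c\bar\C$ (namely $\bar f'$ agrees with $\bar g$ in the other coordinates and has $i$th component $1_{A_{i\epsilon}}$), hence is a cofibration by hypothesis. Wait---I need to be slightly careful: an identity map need not be a cofibration. But $1_{A_{i\epsilon}}$ is an isomorphism, hence a cofibration by (W1), so $\bar f'\in c\bar\C$ and the hypothesis does apply.

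With this reduction in hand, the inductive hypothesis applied to $G$ (a functor of $k-1$ variables) tells us $G$ satisfies (kE4), i.e.\ $[\bar g]_G$ is good for all $\bar g\in c\bar\C'$; in particular the face $[\bar f_{\hat\imath}]_G$ of $[\bar f]_F$ is good. Since this holds for every coordinate $i$ and every $\epsilon\in\{0,1\}$, all faces of $[\bar f]_F$ are good, and combined with the hypothesis that its southern arrow is a cofibration we conclude $[\bar f]_F$ is good. As $\bar f\in c\bar\C$ was arbitrary, $F$ satisfies (kE4).

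The main subtlety---really the only thing requiring care---is the bookkeeping identifying a face of $[\bar f]_F$ with a cube of the form $[\bar g]_G$ and checking that $G$ genuinely satisfies both (kE1) and the running hypothesis of the lemma; the case $A_{i\epsilon}=0$ needs to be handled separately (the frozen functor is then identically $0$, and one should note directly that every cube of the zero functor is good because all its southern arrows are $0\to 0$, an isomorphism). Everything else is a routine unwinding of the recursive definition of ``good,'' so I would keep the exposition brief and lean on the symmetry of the situation in the coordinates.
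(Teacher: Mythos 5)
Your overall strategy --- induction on dimension, recognizing a face of $[\bar f]_F$ as a cube $[\bar g]_G$ for the functor $G = F(\ldots, A_{i\epsilon}, \ldots)$ with one variable frozen, then appealing to the inductive hypothesis --- is sound and close in spirit to the paper's. But the step where you verify that $G$ inherits the running hypothesis of the lemma is broken. You set $\bar f'$ to agree with $\bar g$ except with $i$th component $1_{A_{i\epsilon}}$, note that $[\bar g]_G$ appears as a \emph{face} of $[\bar f']_F$, and conclude from the hypothesis applied to $\bar f'$ that the southern arrow of $[\bar g]_G$ is a cofibration. The hypothesis does not say that: it only controls the southern arrow of the full $k$-cube $[\bar f']_F$, not the southern arrows of its faces, and the latter is exactly what you need. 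Worse, with $f'_i = 1_{A_{i\epsilon}}$ the $i$th direction of $[\bar f']_F$ is degenerate, so its southern arrow is an isomorphism no matter what (already for $k=2$ the colimit over the punctured square collapses along the identity edge to the top-right corner). The hypothesis applied to this $\bar f'$ is therefore vacuous; it carries no information about $[\bar g]_G$.

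The repair is exactly the move the paper makes: instead of $1_{A_{i\epsilon}}$, take the $i$th component of $\bar f'$ to be the cofibration $0 \rcofib A_{i\epsilon}$. Axiom~(kE1) then forces the entire $\epsilon_i = 0$ slice of $[\bar f']_F$ to be constant at the zero object, and a short computation (or an appeal to Lemma~\ref{lem:pushout}) shows that the southern arrow of the full $k$-cube $[\bar f']_F$ coincides with the southern arrow of its $\epsilon_i=1$ face $[\bar g]_G$. Now the hypothesis applies to the full tuple $\bar f' \in c\bar\C$ and delivers what you want. Once one sees this ``pad with $0 \rcofib A$'' trick, the dimension induction is actually unnecessary: the paper parametrizes an arbitrary iterated face of $[\bar f]_F$ by a subset $J$ of free coordinates together with a choice $\bar\eta$ of values for the frozen coordinates, pads $(f_j)_{j\in J}$ with the cofibrations $0\rcofib A_{j\eta_j}$ for $j\notin J$ to form $\bar h\in c\bar\C$, and applies the hypothesis to $[\bar h]_F$ directly, handling all faces at once.
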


\begin{proof}
  Fix $\bar f\in c\C$, writing $f_i:A_{i0}\rto A_{i1}$.  We know that the
  southern arrow of $[\bar f]_F$ is a cofibration, so all that we need to show
  is that all faces of $[\bar f]_F$ are good.  Let $J\subseteq \{1,\ldots,n\}$
  and let $\bar\eta\in\{0,1\}^k$.  A face of $[\bar f]_F$ is determined by
  $J$ and $\bar \eta$ by considering the cube $I_{J,\bar\eta}:\I^{|J|}
  \rto \D$ given by
  \[I_{J,\bar\eta}(\epsilon_{j_1},\ldots,\epsilon_{j_{|J|}}) = F(B_1,\ldots,B_k)
  \qquad B_j = \begin{cases} A_{j\eta_j} & \hbox{if } j\notin J \\
    A_{j\epsilon_j} & \hbox{if }j\in J.\end{cases}\] More informally, we let $J$
  determine which variables are allowed to change, and let $\bar\eta$ determine
  the value of the other variables.  Let $\bar h$ have $h_j = f_j$ if $j\in J$
  and $0 \rto A_{j\eta_j}$ otherwise.  Then $\bar h \in c\bar \C$, so by
  assumption we know that the southern arrow of $[\bar h]_F$ is a cofibration.
  But this southern arrow is exactly the southern arrow of
  $I_{J,\bar\eta}$, so we see that the southern arrow of
  $I_{J,\bar\eta}$ is a cofibration, as desired.
\end{proof}

We now need to consider composition of multiexact functors.  First, a simple
observation about southern arrows.

\begin{lemma} \lbl{lem:codiag-1var}
  A $k$-exact functor commutes with taking southern arrows in each variable
  independently. 
\end{lemma}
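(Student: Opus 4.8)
The plan is to reduce the statement to two facts about a $k$-exact functor $F\colon\C_1\timeses\C_k\rto\D$. First: fixing all but one of its inputs yields a $1$-exact functor. Second: a $1$-exact functor carries good cubes to good cubes and, on such cubes, commutes with the formation of southern arrows. Granting these, fix $1\le i\le k$ and objects $A_j\in\C_j$ for $j\neq i$, and set $G\defeq F(A_1,\ldots,A_{i-1},-,A_{i+1},\ldots,A_k)\colon\C_i\rto\D$. Then $G$ is $1$-exact, so for any good cube $I$ in $\C_i$ the cube $G\circ I$ is good and its southern arrow is $G$ applied to the southern arrow of $I$; this is precisely the assertion that $F$ commutes with southern arrows in the $i$-th variable, the other variables held fixed. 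Since such a step outputs a good cube, it may be iterated over the $k$ variables, which is the claimed independence.

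For the first fact, the only delicate point is preservation of cofibrations: preservation of weak equivalences is Axiom~(kE3) applied to morphisms of the form $(1_{A_1},\ldots,g,\ldots,1_{A_k})$, and preservation of pushouts in the free variable is Axiom~(kE2). Given a cofibration $g\colon B\rcofib B'$ in $\C_i$, I would form the $k$-cube $[\bar h]_F$, where $\bar h$ has $h_i=g$ and $h_j\colon 0\rcofib A_j$ for $j\neq i$ (each a cofibration by Axiom~(W3), so $\bar h\in c\bar\C$). By Axiom~(kE1) every vertex of this cube is the zero object except along the edge in direction $i$ over the all-ones vertex in the other directions; hence its punctured colimit collapses to $F(A_1,\ldots,B,\ldots,A_k)$ and its southern arrow is exactly $G(g)$, which is therefore a cofibration because $F$ satisfies Axiom~(kE4).

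For the second fact I would induct on the dimension $n$ of the cube. The case $n\le 1$ is immediate, a good $1$-cube being a cofibration, which $G$ preserves. For $n\ge 2$ I would use the pushout square recorded on page~\pageref{eq:pushout}: for a good $n$-cube $I$ it exhibits
\[\colim I'\;\cong\;I(1,\ldots,1,0)\cup_{\colim(I_{n0})'}\colim(I_{n1})',\]
where the gluing map $\colim(I_{n0})'\rto I(1,\ldots,1,0)$ is the southern arrow of the good $(n-1)$-cube $I_{n0}$, hence a cofibration. A $1$-exact functor preserves this pushout along a cofibration; by the inductive hypothesis applied to the faces $I_{n0}$ and $I_{n1}$ it identifies their punctured colimits, and their southern arrows, with those of $G\circ I_{n0}$ and $G\circ I_{n1}$; comparing with the same pushout square for $G\circ I$ (Lemma~\ref{lem:pushout}) then identifies $\colim(G\circ I)'$ with $G(\colim I')$ and the southern arrow of $G\circ I$ with $G$ of the southern arrow of $I$. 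That $G\circ I$ is itself good follows since its faces are the images under $G$ of the good faces of $I$ and its southern arrow is the image of a cofibration.

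The step I expect to cost the most care is the inductive comparison in the second fact: one must check not merely that the objects $\colim(G\circ I)'$ and $G(\colim I')$ agree, but that the isomorphism between them is compatible with the cocones defining those colimits, so that the southern arrows are genuinely identified. This is a routine diagram chase through the universal properties of the pushouts and colimits involved, organized by Lemma~\ref{lem:pushout}; the index bookkeeping needed to iterate over the $k$ variables in the first paragraph is likewise routine, since each step both consumes and produces good-cube data.
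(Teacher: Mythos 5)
Your proof is correct and rests on the same two ingredients as the paper's one-line proof: the iterated-pushout description of southern arrows (Lemma~\ref{lem:pushout} via the square on page~\pageref{eq:pushout}) and preservation of pushouts in each variable from Axiom~(kE2). The extra scaffolding you build in ``fact~1'' --- proving that the partial application $G=F(A_1,\ldots,-,\ldots,A_k)$ also preserves cofibrations, via the $[\bar h]_F$ cube with $h_j\colon 0\rcofib A_j$ --- is more than the commutation statement needs: only (kE2) (pushout preservation in the free variable) and the goodness of the input cube $I$ (so that the gluing maps in Lemma~\ref{lem:pushout} are cofibrations) enter the induction of ``fact~2.'' Your claim that preservation of goodness is what makes iteration over the $k$ variables possible is likewise not quite where the weight sits; iterating only requires the input cubes $I_i\subseteq\C_i$ to be good, not the images $G\circ I_i$, since at each stage one applies ``fact~2'' to a fresh partial application. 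None of this is an error, just extra work beyond what the paper's terse argument (and the statement itself) requires; the core reasoning agrees.
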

\begin{proof}
  This follows from Axiom~(kE2) and Lemma~\ref{lem:pushout}, which says that
  southern arrows can be computed as iterated pushouts.
\end{proof}

The next two results concern the following situtation.  Let $j_1,\ldots,j_k\in
\Z_{\geq 0}$, and write $m_i = \sum_{\ell=1}^i j_i$.  Given $j_i$-exact functors
\[G_i:\C_{m_{i-1} + 1} \timeses \C_{m_i} \rto \D_i\] and a $k$-exact functor
\[F:\D_1\timeses \D_k \rto \D,\] define the composite functor
\[H=F\circ (G_1\timeses G_k): \C_1\timeses \C_{m_k} \rto \D.\]

\begin{lemma} \lbl{lem:composition-good} Let $g_i$ be the southern arrow of
  $[(f_{m_{i-1}+1},\ldots,f_{m_i})]_{G_i}$. The southern arrow of $[\bar f]_H$
  is isomorphic to the southern arrow of $[\bar g]_F$.
\end{lemma}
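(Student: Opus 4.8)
The plan is to compute $\colim[\bar f]_H'$ one ``block'' of variables at a time and match it against $\colim[\bar g]_F'$ by a Fubini-style rearrangement. Group the $m_k$ coordinates into blocks of sizes $j_1,\ldots,j_k$, so that $\I^{m_k}=\I^{j_1}\timeses\I^{j_k}$; write a vertex as $\bar\epsilon=(\bar\epsilon^{(1)},\ldots,\bar\epsilon^{(k)})$ accordingly, set $\bar f^{(i)}=(f_{m_{i-1}+1},\ldots,f_{m_i})$, and let $P_i=[\bar f^{(i)}]_{G_i}\colon\I^{j_i}\to\D_i$, so that $g_i\colon S_i\to T_i$ is the southern arrow of $P_i$ with $S_i=\colim P_i'$ and $T_i=P_i(1,\ldots,1)$. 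Then $[\bar f]_H=F\circ(P_1\timeses P_k)$; in particular $[\bar f]_H(1,\ldots,1)=F(T_1,\ldots,T_k)=[\bar g]_F(1,\ldots,1)$, so the two southern arrows have the same target, and it remains to identify their sources compatibly with the projections to this target.

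To identify the sources, apply Lemma~\ref{lem:pushout} to the functor $[\bar f]_H'$ on $\A=\I^{m_k}\setminus\{(1,\ldots,1)\}$ with the $k$ subcategories $\A_i=\{\bar\epsilon\in\A:\bar\epsilon^{(i)}\neq(1,\ldots,1)\}$; these cover $\A$ because $\bar\epsilon$ fails to be $(1,\ldots,1)$ exactly when some block does. The lemma yields an isomorphism $\colim I'\cong\colim[\bar f]_H'$, where $I$ is the $k$-cube (defined as in Lemma~\ref{lem:pushout}) with $I(\bar\delta)=\colim[\bar f]_H'\big|_{\bigcap_{\delta_i=0}\A_i}$ for $\bar\delta\neq(1,\ldots,1)$. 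Now $\bigcap_{\delta_i=0}\A_i=\prod_i K_i$ with $K_i=\I^{j_i}$ when $\delta_i=1$ and $K_i=\I^{j_i}\setminus\{(1,\ldots,1)\}$ when $\delta_i=0$, and $[\bar f]_H'\big|_{\prod K_i}=F\circ\prod_i(P_i\big|_{K_i})$. The heart of the proof is then the Fubini-type identity
\[\colim\big(F\circ\prod_i(P_i\big|_{K_i})\big)\cong F\big(\colim P_1\big|_{K_1},\ldots,\colim P_k\big|_{K_k}\big),\]
in which $\colim P_i\big|_{\I^{j_i}}=T_i$ because the index category has a terminal object (Lemma~\ref{lem:colim-comp}) and $\colim P_i\big|_{\I^{j_i}\setminus\{(1,\ldots,1)\}}=S_i$ by definition. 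Granting this, $I(\bar\delta)=F(Z_1,\ldots,Z_k)$ with $Z_i$ equal to $T_i$ or $S_i$ according as $\delta_i$ is $1$ or $0$, and the structure maps of $I$ are obtained by applying $F$ to the $g_i$; thus $I'$ is the diagram $[\bar g]_F'$, whence $\colim[\bar f]_H'\cong\colim I'=\colim[\bar g]_F'$. Since every identification used is the natural one, this isomorphism carries the cocone $[\bar f]_H(\bar\epsilon)\to F(T_1,\ldots,T_k)$ to the cocone $[\bar g]_F(\bar\delta)\to F(T_1,\ldots,T_k)$, hence the southern arrow of $[\bar f]_H$ to that of $[\bar g]_F$.

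The main obstacle is the Fubini identity, i.e.\ commuting $F$ past the colimit over $\prod_i K_i$. I would argue one block at a time: colimits commute, so the left-hand side equals $\colim_{K_1}\cdots\colim_{K_k}\,F\circ\prod_i(P_i\big|_{K_i})$, and at the $i$-th stage one must commute $F$ past $\colim_{K_i}$ in its $i$-th argument while the remaining arguments are held fixed at the intermediate colimits already formed. When $K_i=\I^{j_i}$ this is immediate, since that colimit is just evaluation at the terminal vertex; when $K_i=\I^{j_i}\setminus\{(1,\ldots,1)\}$ the colimit is the source of the southern arrow of $P_i$, which by Lemma~\ref{lem:pushout} is built from iterated pushouts, and $F$ preserves those in each variable by Axiom~(kE2) --- this is precisely the argument of Lemma~\ref{lem:codiag-1var}. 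One should also check in passing that the intermediate colimits all exist; this follows from goodness of the $P_i$ and their faces via Lemma~\ref{lem:semi-existence} when $\bar f\in c\bar\C$, and otherwise from the standing hypothesis that the southern arrows named in the statement exist.
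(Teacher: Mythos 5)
Your proof is correct and follows essentially the same route as the paper's: you apply Lemma~\ref{lem:pushout} to $[\bar f]_H'$ with the same covering subcategories $\A_i=\{\bar\epsilon:\bar\epsilon^{(i)}\neq(1,\ldots,1)\}$, and you invoke the commuting of $F$ with blockwise southern arrows (the content of Lemma~\ref{lem:codiag-1var}) to identify the resulting cube with $[\bar g]_F$. The only difference is that you spell out the Fubini step and the existence of the intermediate colimits in somewhat more detail, and your description of the $K_i$ correctly orients the cases by $\delta_i$ (the paper's statement of $B_i^{(J)}$ swaps the two cases, an apparent typo that you implicitly fix).
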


\begin{proof}
  We will use Lemma~\ref{lem:pushout}.  
  Let $\A_i = \I^{m_{i-1}} \times (\I^{j_i}\backslash\{(1,\ldots,1)\} \times \I^{m_k-m_i}$, so that
  for any $J\subseteq \{1,\ldots,k\}$,
  \[\bigcap_{j\in J} \A_j = \prod_{i=1}^k B_i^{(J)}, \qquad\hbox{where
  }B_i^{(J)} =  \begin{cases} \I^{j_i} & \hbox{if
    }i\in J \\ \I^{j_i} \backslash \{(1,\ldots,1)\} &  \hbox{if } i\notin J\end{cases}\] 
  and
  \[\bigcup_{i=1}^n \A_i = \I^{m_k}\backslash\{(1,\ldots,1)\}.\]
  In addition, by Lemma~\ref{lem:codiag-1var}
  \[\colim F\circ (G_1\timeses G_k)\big|_{\bigcap_{j\in J} \A_j} \cong
  F\left(\colim G_1 \big|_{B_1^{(J)}}, \ldots, \colim G_k \big|_{B_k^{(J)}}
  \right).\] Note that these are exactly the entries of the cube $[\bar g]_F$.
  Applying Lemma~\ref{lem:colim-comp} to compute $\colim {}[\bar f]_H'$ in terms
  of these, we see that the southern arrow of $[\bar f]_H$ is exactly the
  southern arrow of $[\bar g]_F$, as desired.
\end{proof}

Using this we can show that $H$ is $m_k$-exact, and thus that $\WaldCat$ is a
multicategory. 

\begin{proposition} \lbl{lem:k-composition} $\WaldCat$ is a multicategory.
\end{proposition}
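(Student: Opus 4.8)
The plan is to verify that the composite functor $H = F \circ (G_1 \timeses G_k)$ satisfies all four axioms (kE1)--(kE4), since $\WaldCat$ being a multicategory amounts precisely to showing that multiexact functors are closed under the composition law (the unit functors are clearly $1$-exact, the $\Sigma_k$-action is just permutation of variables, and the multicategory compatibility axioms are inherited from composition of ordinary functors). Axioms (kE1) and (kE3) are essentially immediate: if some $A_i = 0$ then the relevant $G_i$ outputs $0$ by its own (kE1), and then $F$ outputs $0$ by its (kE1); similarly weak equivalences in each $\C_\ell$ are sent to weak equivalences by the appropriate $G_i$ and then to weak equivalences by $F$. Axiom (kE2) follows because each $G_i$ preserves pushouts in each variable by its own (kE2), and $F$ does likewise, so the composite does too.

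The substance is Axiom (kE4): for $\bar f \in c\bar\C$ we must show $[\bar f]_H$ is a good $m_k$-cube. Here I would invoke Lemma~\ref{lem:restricted-3}, which reduces (kE4) to checking just that the southern arrow of $[\bar f]_H$ is a cofibration (the faces then take care of themselves, since $H$ satisfies (kE1) as noted above). Now Lemma~\ref{lem:composition-good} identifies the southern arrow of $[\bar f]_H$ with the southern arrow of $[\bar g]_F$, where $g_i$ is the southern arrow of $[(f_{m_{i-1}+1},\ldots,f_{m_i})]_{G_i}$. Since each $G_i$ is $j_i$-exact, Axiom~(kE4) for $G_i$ together with the definition of a good cube tells us $g_i$ is a cofibration, i.e.\ $\bar g \in c\bar\D$. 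Then Axiom~(kE4) for the $k$-exact functor $F$ says $[\bar g]_F$ is a good cube, and in particular its southern arrow is a cofibration. Chasing the isomorphism back through Lemma~\ref{lem:composition-good}, the southern arrow of $[\bar f]_H$ is a cofibration, which is what Lemma~\ref{lem:restricted-3} needs.

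I expect the only real obstacle to be bookkeeping: making sure the indices $m_i = \sum_{\ell \le i} j_\ell$ line up correctly so that the source variables $\C_{m_{i-1}+1},\ldots,\C_{m_i}$ are exactly the inputs of $G_i$, and that a cofibration $f_\ell \in c\C_\ell$ for each $\ell$ does indeed produce, via each $G_i$, a tuple $\bar g$ all of whose entries are cofibrations in the respective $\D_i$. The one genuinely new verification beyond citing the earlier lemmas is that $\bar g \in c\bar\D$; but as observed, this is immediate from the definition of goodness (a good cube has cofibration southern arrow) applied to the good cubes $[(f_{m_{i-1}+1},\ldots,f_{m_i})]_{G_i}$ furnished by (kE4) for each $G_i$. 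After assembling these pieces one concludes that $H$ is $m_k$-exact, hence the composition law on $\WaldCat$ is well-defined and $\WaldCat$ is a multicategory.
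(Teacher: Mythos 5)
Your proposal is correct and follows essentially the same route as the paper: reduce to checking that $H=F\circ(G_1\timeses G_k)$ is $m_k$-exact, dispatch (kE1)--(kE3) directly, and for (kE4) use Lemma~\ref{lem:restricted-3} to reduce to the southern arrow and Lemma~\ref{lem:composition-good} to identify it with the southern arrow of $[\bar g]_F$, where each $g_i$ is a cofibration by (kE4) for $G_i$. The only difference is that you spell out the bookkeeping and the (kE1)--(kE3) verifications a bit more explicitly than the paper does.
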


\begin{proof}
  It suffices to check that $H$ is $m_k$-exact.  Axioms~(kE1), (kE2) and (kE3) are
  direct from the definition, so we just need to check Axiom~(kE4).
  
  Let $\bar f\in \bar c\C$; we want to show that $[\bar f]_H$ is good.  By
  Lemma~\ref{lem:restricted-3} it suffices to show that the southern arrow of
  $[\bar f]_H$ is a cofibration.  Let $g_i$ be the southern arrow of
  $[(f_{m_{i-1}+1},\ldots,f_{m_i})]_{G_i}$.  As $G_i$ is $j_i$-exact we know
  that $g_i$ is a cofibration; as $F$ is $k$-exact, the southern arrow of
  $[\bar g]_F$ is also a cofibration.  However, by
  Lemma~\ref{lem:composition-good} we know that the southern arrow of $[\bar
  g]_F$ is exactly the southern arrow of $[\bar f]_H$, so we see that the
  southern arrow of $[\bar f]_H$ must also be a cofibration, as desired.
\end{proof}

\section{The closed structure}

We would now like to show that $\WaldCat$ is a closed multicategory.  

\begin{definition}
  We define the internal hom $\uWaldCat(\C_1,\ldots,\C_k;\D)$ in the following
  manner: 
  \begin{description}
  \item[objects] $k$-exact functors $\C_1\timeses \C_k \rto \D$,
  \item[morphisms] natural transformations between functors,
  \item[weak equivalences] natural weak equivalences between functors, and 
  \item[cofibrations] natural transformations $\alpha:F\rto G$ such that for any
    $\bar f \in c\bar \C$, the cube \[\left([\bar f]_F \Rto^\alpha [\bar f]_G\right)\] is good.
  \end{description}
\end{definition}

In particular, note that all cofibrations are levelwise cofibrations.

We need to prove that this is well-defined.

\begin{lemma}
  $\uWaldCat(\C_1,\ldots,\C_k;\D)$ is a Waldhausen category.
\end{lemma}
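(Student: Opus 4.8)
The plan is to verify axioms (W1)--(W5) for the category $\uWaldCat(\bar\C;\D)$, checking each against the definitions of its weak equivalences (levelwise weak equivalences of functors) and cofibrations (natural transformations $\alpha\colon F\to G$ such that $([\bar f]_F\Rto^\alpha[\bar f]_G)$ is good for every $\bar f\in c\bar\C$). Axioms (W1) and (W2) are immediate: isomorphisms of functors are levelwise isomorphisms, hence levelwise weak equivalences and (since all faces of an invertible $k{+}1$-cube are invertible, and invertible morphisms are cofibrations) cofibrations; and (W2), the $2$-out-of-$3$ property, is inherited levelwise from $\D$. For (W3) the zero object is the functor constant at $0$, which is $k$-exact since it trivially satisfies (kE1)--(kE4) (the relevant cubes are constant at $0$, whose southern arrows are isomorphisms, hence cofibrations, and all faces are likewise constant), and the unique map $0\to F$ is a cofibration because $(0\Rto F)$ restricted along any $[\bar f]$ is $([\bar f]_0\Rto[\bar f]_F)=([\text{const }0]\Rto[\bar f]_F)$, whose southern arrow is the southern arrow of $[\bar f]_F$ sitting over a zero object, which is a cofibration since $[\bar f]_F$ is good (as $F$ is $k$-exact); one must also check all faces of this $k{+}1$-cube are good, which reduces to faces of $[\bar f]_F$ together with constant-$0$ faces.

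The substance of the proof is (W4) and (W5). For (W4) we are given $K\xleftarrow{} F\xrightarrow{\alpha} G$ in $\uWaldCat(\bar\C;\D)$ with $\alpha$ a cofibration, and we must produce a pushout $G\cup_F K$, show it is $k$-exact, and show $K\to G\cup_F K$ is a cofibration. The pushout is formed objectwise in $\D$: for each $\bar A$ we set $(G\cup_F K)(\bar A) = G(\bar A)\cup_{F(\bar A)} K(\bar A)$, which exists because $F(\bar A)\to G(\bar A)$ is a cofibration (cofibrations in $\uWaldCat$ are levelwise cofibrations). To see this assembles into a functor and is $k$-exact, the key tool is Proposition~\ref{prop:good-pushout}: for any $\bar f\in c\bar\C$, applying the pushout to the cubes $[\bar f]_K \Leftarrow [\bar f]_F \xRightarrow{\alpha} [\bar f]_G$ --- where $[\bar f]_F,[\bar f]_G,[\bar f]_K$ are good (by $k$-exactness) and $([\bar f]_F\Rto^\alpha[\bar f]_G)$ is good (since $\alpha$ is a cofibration) --- yields that $[\bar f]_{G\cup_F K}=[\bar f]_G\cup_{[\bar f]_F}[\bar f]_K$ is a good cube, which is exactly (kE4) for $G\cup_F K$; Axioms (kE1) and (kE3) are immediate (pushout of $0$'s is $0$; pushout of weak equivalences is a weak equivalence by axiom (W5) in $\D$), and (kE2) follows because pushouts commute with pushouts. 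Moreover Proposition~\ref{prop:good-pushout} directly gives that the natural transformation $\beta\colon K\Rto G\cup_F K$ has $[\beta]$ good after restricting along any $[\bar f]$, i.e.\ $\beta$ is a cofibration. Finally, for the universal property one checks that for any $k$-exact $H$, an exact (or merely natural) cocone under $K\leftarrow F\to G$ factors uniquely through $G\cup_F K$, which holds levelwise in $\D$ and automatically respects the Waldhausen structure.

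For (W5) we are handed a diagram of natural transformations
\[
\begin{array}{ccccc}
G & \leftarrow & F & \rightarrow & K\\
\downarrow & & \downarrow & & \downarrow\\
G' & \leftarrow & F' & \rightarrow & K'
\end{array}
\]
with vertical maps weak equivalences and the horizontal left-pointing maps cofibrations (matching the shape of (W5)), and we must show $G\cup_F K\to G'\cup_{F'}K'$ is a weak equivalence. Since the pushouts and the weak equivalences are both levelwise, this is immediate from (W5) in $\D$ applied objectwise at each $\bar A$.

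The main obstacle will be (W4): specifically, the bookkeeping needed to see that the objectwise pushout $G\cup_F K$ really is a \emph{$k$-exact functor} and not merely an object-by-object construction --- i.e.\ that it is functorial in all of $\bar\C$ and that the cube $[\bar f]_{G\cup_F K}$ one forms for a tuple of cofibrations agrees with the pushout cube $[\bar f]_G\cup_{[\bar f]_F}[\bar f]_K$ to which Proposition~\ref{prop:good-pushout} applies. This hinges on the compatibility of objectwise colimits with the cube construction $[\bar f]_{(-)}$, which is a routine but slightly involved interchange-of-colimits argument (already implicitly used in Lemma~\ref{lem:composition-good} via Lemma~\ref{lem:pushout}). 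Everything else reduces cleanly to the corresponding axiom for $\D$ together with Proposition~\ref{prop:good-pushout}.
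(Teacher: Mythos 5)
Your proof takes essentially the same approach as the paper: the crux is Axiom~(W4), which you verify by forming the levelwise pushout $G\cup_F K$ and invoking Proposition~\ref{prop:good-pushout} to get both $k$-exactness of the pushout and goodness of $[\beta]$, exactly as the paper does, while (W1)--(W3) and (W5) reduce directly to the corresponding axioms in $\D$. One small imprecision worth flagging: in your (W1) argument the $k{+}1$-cube $([\bar f]_F \Rto^\alpha [\bar f]_G)$ is \emph{not} invertible in the $\bar f$-directions when $\alpha$ is an isomorphism, so ``all faces of an invertible cube are invertible'' does not apply; rather, by the pushout formula from Lemma~\ref{lem:pushout} the southern arrow of that cube is the pushout of the southern arrow of $[\bar f]_F$ along the isomorphism $\colim[\bar f]_F' \to \colim[\bar f]_G'$, hence a cofibration, with faces handled by induction.
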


\begin{proof}
  The only axiom that is nontrivial to check is Axiom~(W4); the others follow
  directly from the fact that $\D$ is a Waldhausen category.  Thus we focus our
  attention on checking Axiom~(W4).

  Consider a diagram
  \[H \lto F \rcofib^\alpha G \in \uWaldCat(\bar\C;\D).\]
  We know that $\alpha$ is a levelwise cofibration and that pushouts along
  cofibrations exist in $\D$, so we get a functor $G\cup_F H:\bar\C\rto \D$.  We
  need to check that this functor is $k$-exact, and that the induced natural
  transformation $\beta:H\rto G\cup_FH$ is a cofibration inside
  $\uWaldCat(\bar\C;\D)$.

  The functor $G\cup_F H$ satisfies Axiom~(kE1) and (kE2) because $F$, $G$ and $H$
  are $k$-exact, and has Axiom~(kE3) because Axiom~(W5) holds in $\D$.  To prove
  Axiom~(kE4), fix $\bar f\in c\bar \C$.  We know that $[\bar f]_F$, $[\bar
  f]_G$ and $[\bar f]_H$ are good, so $[\bar f]_{G\cup_F H}$ is also good by
  Proposition~\ref{prop:good-pushout}.  Thus $G\cup_F H$ is $k$-exact.
  Proposition~\ref{prop:good-pushout} also tells us that $([\bar f]_H \Rto^\beta
  [\bar f]_{G\cup_F H})$ is good, which means that $\beta$ is a cofibration, as
  desired.
\end{proof}

In order for this definition to make $\WaldCat$ into a closed multicategory, we
need a $k+1$-exact evaluation morphism.  

\begin{definition} \lbl{def:evaluation} 
  The functor \[\ev_{\C_1,\ldots,\C_k;\D}: \C_1\times\cdots\times
  \C_k \times \uWaldCat(\C_1,\ldots,\C_k;\D) \rto \D \] is defined by 
  \[\ev_{\C_1,\ldots,\C_k;\D}(A_1,\ldots,A_k,F) = F(A_1,\ldots,A_k).\]
\end{definition}

\begin{lemma} 
  The functor $\ev_{\C_1,\ldots,\C_k;\D}$ is $k+1$-exact.
\end{lemma}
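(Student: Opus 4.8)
The plan is to verify Axioms~(kE1)--(kE4) for $\ev = \ev_{\C_1,\ldots,\C_k;\D}$ directly, following the same pattern as the proof that $\uWaldCat(\bar\C;\D)$ is a Waldhausen category. Write a typical object of the domain as $(\bar A, F)$ with $\bar A\in\bar\C$ and $F$ a $k$-exact functor. Axiom~(kE1): if $A_i = 0$ for some $i\le k$, then $F(\bar A) = 0$ since $F$ is $k$-exact; if the last variable $F$ is the zero object of $\uWaldCat(\bar\C;\D)$, i.e. the constant functor at $0$, then $\ev(\bar A, 0) = 0$. Axioms~(kE2) and (kE3) are essentially immediate: a pushout in the domain is computed variablewise, and in the $\uWaldCat$-variable it is a levelwise pushout of $k$-exact functors, so $\ev$ preserves it because each $F$ preserves pushouts and evaluation is a levelwise operation; weak equivalences in the domain are either weak equivalences $\bar A\rto \bar A'$ in $\bar\C$ (sent to $F(\bar A)\rwe F(\bar A')$ by (kE3) for $F$) or natural weak equivalences $\alpha\colon F\rwe G$ (sent to $\alpha_{\bar A}$, a weak equivalence by definition of $w\uWaldCat$), and a general weak equivalence in a product is a composite of such, preserved by the two-out-of-three argument.

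The substance is Axiom~(kE4). By Lemma~\ref{lem:restricted-3} (applied to $\ev$, which satisfies (kE1)) it suffices to show that the southern arrow of $[(\bar f, \alpha)]_{\ev}$ is a cofibration for every $\bar f\in c\bar\C$ and every cofibration $\alpha\colon F\rcofib G$ in $\uWaldCat(\bar\C;\D)$. Here $\bar f = (f_1,\ldots,f_k)$ and the ``$(k+1)$st morphism'' is $\alpha$. The key observation is that the $(k+1)$-cube $[(\bar f,\alpha)]_{\ev}$ is precisely the $(k+1)$-cube $\bigl([\bar f]_F \Rto^{\alpha} [\bar f]_G\bigr)$: its two $k$-dimensional faces in the last coordinate are $[\bar f]_F$ (at $\epsilon_{k+1}=0$) and $[\bar f]_G$ (at $\epsilon_{k+1}=1$), and the transition maps in the last coordinate are the components of $\alpha$ at the various $F(A_{1\epsilon_1},\ldots,A_{k\epsilon_k})\rto G(A_{1\epsilon_1},\ldots,A_{k\epsilon_k})$. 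But $\alpha$ being a cofibration in $\uWaldCat(\bar\C;\D)$ means exactly that $\bigl([\bar f]_F \Rto^{\alpha} [\bar f]_G\bigr)$ is good for all $\bar f\in c\bar\C$, hence in particular its southern arrow is a cofibration. This identifies the required southern arrow with a known cofibration and finishes (kE4).

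The main obstacle, such as it is, is bookkeeping: one must carefully match the indexing of the $(k+1)$-cube $[(\bar f,\alpha)]_{\ev}$ against the definition of $(F\Rto^\alpha G)$ on $\bar\C\times\I$, making sure that ``plugging in the natural transformation $\alpha$ as the morphism in the last $\I$-factor'' yields exactly the cube whose goodness is built into the definition of cofibration in the internal hom. Once that identification is in place, every clause is a one-line invocation of an earlier result: (kE1) from $k$-exactness of $F$ and the description of the zero object; (kE2)/(kE3) from levelwise computation plus (kE2)/(kE3) for each $F$ and the definition of $w\uWaldCat$; and (kE4) from Lemma~\ref{lem:restricted-3} together with the definition of the cofibrations in $\uWaldCat(\bar\C;\D)$. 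I would also remark in passing that the faces of $[(\bar f,\alpha)]_{\ev}$ not meeting the last coordinate are $[\bar f]_F$ and $[\bar f]_G$, which are good because $F$ and $G$ are $k$-exact, so Lemma~\ref{lem:restricted-3} is genuinely doing the work of reducing to a single southern arrow rather than checking all faces by hand.
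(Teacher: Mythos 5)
Your proof is correct and is essentially the paper's argument: identify $[(\bar f,\alpha)]_{\ev}$ with $([\bar f]_F \Rto^\alpha [\bar f]_G)$ and observe that goodness of this cube is precisely the definition of $\alpha$ being a cofibration in $\uWaldCat(\bar\C;\D)$. One small remark: the detour through Lemma~\ref{lem:restricted-3} is superfluous, since you in fact show the whole cube $([\bar f]_F \Rto^\alpha [\bar f]_G)$ is good (not merely that its southern arrow is a cofibration), which already gives (kE4) directly, exactly as the paper does.
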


\begin{proof}
  Axioms~(kE1), (kE2) and (kE3) are easily checked from the fact that $F$ is
  $k$-exact.  Thus we only need to check Axiom~(kE4). Given $f_i:A_i\rcofib
  B_i\in \C_i$ and a cofibration $\alpha:F\rto G$ we need to check that $[(\bar
  f, \alpha)]_\ev$ is good.  However, by definition this cube is the cube $([\bar
  f]_F \Rto^\alpha [\bar f]_G)$, which is good because $F$ and $G$ are $k$-exact
  and $\alpha$ is a cofibration.
\end{proof}

It remains to check that this gives a well-defined closed multicategory
structure on $\WaldCat$, i.e. that for all Waldhausen categories
$\C_1,\ldots,\C_k$, $\D$ and $\A_1,\ldots,\A_\ell$ the function
\[\begin{array}{l} \WaldCat_\ell(\A_1,\ldots,\A_\ell; \uWaldCat(\C_1,\ldots,\C_k;\D)) \\
  \rto
  \WaldCat_{k+\ell}(\C_1,\ldots,\C_k,\A_1,\ldots,\A_\ell; \D)\end{array}\] given by $F
\rgoesto \ev_{\C_1,\ldots,\C_k,\D} \circ (1_{\C_1},\ldots,1_{\C_k},F)$ is a
bijection.  We can construct an inverse easily by partial application, so
assuming that the partial inverse is well-defined we know that this is a
bijection.  It is also clearly $\Sigma_k\times \Sigma_\ell$-equivariant.  Thus we need to show the following:
\begin{lemma}
  Fix $1\leq \ell \leq k$.  Let $F:\C_1\times\cdots\times \C_k \rto \D$ be a
  $k$-exact functor, and fix $A_i \in \C_i$ for $\ell < i\leq k$.  Then the
  functor 
  \[F(-,A_{\ell+1},\ldots,A_k):\C_1\times\cdots\times \C_\ell \rto \D\]
  is $\ell$-exact.
\end{lemma}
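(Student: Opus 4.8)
The plan is to verify Axioms~(kE1)--(kE4) directly for $G \defeq F(-,A_{\ell+1},\ldots,A_k)$, by ``padding'' the missing arguments with the fixed objects $A_{\ell+1},\ldots,A_k$ and with identity morphisms. For the duration of the proof write $\bar\C = \C_1\timeses\C_\ell$ for the source of $G$, reserving $\C_1\timeses\C_k$ for the source of $F$.

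Axioms~(kE1), (kE2) and (kE3) are immediate. For (kE1), $G(A_1,\ldots,A_\ell) = F(A_1,\ldots,A_\ell,A_{\ell+1},\ldots,A_k)$ vanishes whenever one of $A_1,\ldots,A_\ell$ does, by (kE1) for $F$. Axiom~(kE2) holds because $F$ preserves pushouts in each of its first $\ell$ variables, the remaining arguments being held fixed. For (kE3), a levelwise weak equivalence $(g_1,\ldots,g_\ell)$ of $\bar\C$ extends to the levelwise weak equivalence $(g_1,\ldots,g_\ell,1_{A_{\ell+1}},\ldots,1_{A_k})$ of $\C_1\timeses\C_k$, since identities are weak equivalences by (W1); applying $F$ then lands in $w\D$.

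The one point requiring care is (kE4). Fix $\bar f = (f_1,\ldots,f_\ell)\in c\bar\C$ and set $\bar h = (f_1,\ldots,f_\ell,1_{A_{\ell+1}},\ldots,1_{A_k})$. Each $1_{A_i}$ is an isomorphism, hence a cofibration by (W1), so $\bar h \in c(\C_1\timeses\C_k)$; since $F$ is $k$-exact, the $k$-cube $[\bar h]_F$ is good. By construction $[\bar h]_F$ is constant in the coordinates $\ell+1,\ldots,k$ --- on objects because in each such coordinate the $0$- and $1$-values are both $A_i$, and on morphisms because the edges in those directions are identities --- and the iterated face of $[\bar h]_F$ obtained by fixing each of the coordinates $\ell+1,\ldots,k$ at $0$ is precisely the $\ell$-cube $[\bar f]_G$. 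By definition every face of a good cube is good, hence so is every iterated face; therefore $[\bar f]_G$ is good. This establishes (kE4), and we conclude that $G$ is $\ell$-exact.

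There is no serious obstacle here: the argument is essentially bookkeeping, and the only things one must not overlook are that identity morphisms are cofibrations (so that the padded tuple $\bar h$ genuinely lies in $c(\C_1\timeses\C_k)$) and that $[\bar f]_G$ must be identified with the face of $[\bar h]_F$ in which the \emph{padded} coordinates, rather than the original ones, are collapsed. One could equally phrase the last step via Lemma~\ref{lem:restricted-3}, reducing (kE4) to a statement about a single southern arrow, but the face argument yields the whole of (kE4) at once.
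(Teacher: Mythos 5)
Your proof is correct and rests on the same key idea as the paper's: pad the $\ell$-tuple of cofibrations out to a $k$-tuple so that (kE4) for $F$ applies, then observe that $[\bar f]_G$ is a face (hence good) of the resulting good $k$-cube. The one difference is cosmetic: the paper reduces to the case $\ell = k-1$ by induction and pads with the single cofibration $0\rcofib A_k$ (taking the face $\epsilon_k=1$), whereas you pad with all the identities $1_{A_{\ell+1}},\ldots,1_{A_k}$ at once (taking the face with those coordinates at $0$). Both paddings are valid since initial morphisms and identities are both cofibrations, and your version avoids the unnecessary induction.
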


\begin{proof}
  It suffices to prove this for $\ell=k-1$; the rest will follow by induction.
  Axioms (kE1), (kE2) and (kE3) hold immediately, so it suffices to consider
  (kE4).  Let $f_i \in c\C_i$ for $1\leq i < k$ be cofibrations, and let
  $f_k:\initial \rcofib A_k$ for the $A_k$ in the statement of the lemma.  Since
  $F$ was $k$-exact, we know that $[\bar f]_F$, which implies that $([\bar
  f]_F)_{k1}$ is good.  But this is exactly the cube
  $[(f_2,\ldots,f_k)]_{F(-,A_k)}$.  Thus $F(-,A_k)$ is $k-1$-exact, as desired.
\end{proof}

We have now proved:
\begin{proposition}
  $\WaldCat$ is a closed multicategory.
\end{proposition}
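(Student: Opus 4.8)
The final statement, Proposition (``$\WaldCat$ is a closed multicategory''), is essentially a packaging result: all the substantive work has been done in the preceding lemmas. The plan is to verify the two axioms (CM1) and (CM2) from Definition~\ref{def:enrichedmulti} using the internal hom $\uWaldCat(\bar\C;\D)$, the evaluation morphism $\ev_{\bar\C;\D}$, and the bijection established in the preceding lemma.

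First I would check that the data is well-typed. We have verified that $\uWaldCat(\C_1,\ldots,\C_k;\D)$ is a Waldhausen category, that it carries a right $\Sigma_k$-action by permuting input variables (this is the same action that makes $\WaldCat$ a symmetric multicategory, and it is easily seen to be an exact action, i.e. to respect weak equivalences and cofibrations, since permuting variables permutes the cubes $[\bar f]_F$ accordingly), and that $\ev_{\C_1,\ldots,\C_k;\D}$ is $k{+}1$-exact, hence a genuine $k{+}1$-morphism in $\WaldCat$. So the internal-hom-object and evaluation-morphism data of Definition~\ref{def:enrichedmulti} are in place.

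Next I would verify (CM1): that for all $\ell$-tuples $\A_1,\ldots,\A_\ell$ the map
\[\varphi\colon \WaldCat_\ell(\bar\A;\uWaldCat(\bar\C;\D)) \rto \WaldCat_{k+\ell}(\bar\C,\bar\A;\D),\qquad F \rgoesto \ev_{\bar\C;\D}\circ(1_{\C_1},\ldots,1_{\C_k},F)\]
is a bijection. Here I would be explicit about the candidate inverse: given a $k{+}\ell$-exact functor $H\colon \bar\C\times\bar\A\rto\D$, partial application in the first $k$ variables produces, for each $\bar A\in\bar\A$, the functor $H(-,\ldots,-,\bar A)\colon\bar\C\rto\D$, which is $k$-exact by the preceding lemma (applied $\ell$ times, peeling off one $\A$-variable at a time); letting $\bar A$ vary gives a functor $\widehat H\colon\bar\A\rto\uWaldCat(\bar\C;\D)$. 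I would then need to check that $\widehat H$ is $\ell$-exact as a functor into the Waldhausen category $\uWaldCat(\bar\C;\D)$. Axioms (kE1)--(kE3) are immediate from $k{+}\ell$-exactness of $H$ together with the levelwise definitions of $0$, weak equivalences, and cofibrations in $\uWaldCat$; for (kE4) one must check that for $\bar g\in c\bar\A$ the $\ell$-cube $[\bar g]_{\widehat H}$ is good, which unwinds (using that cofibrations and goodness in $\uWaldCat(\bar\C;\D)$ are detected by goodness of the associated cubes $[\bar f]_{(-)}$ in $\D$) to the statement that for every $\bar f\in c\bar\C$ the $(k{+}\ell)$-cube $[(\bar f,\bar g)]_H$ is good --- which is exactly Axiom (kE4) for $H$. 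Finally, $\varphi$ and the assignment $H\mapsto\widehat H$ are mutually inverse: $\varphi(\widehat H)$ evaluated at $(\bar A,\bar B)$ is $\ev(\bar A,\widehat H(\bar B)) = \widehat H(\bar B)(\bar A) = H(\bar A,\bar B)$, and conversely $\widehat{\varphi(F)}(\bar B) = \varphi(F)(-,\bar B) = \ev(-,F(\bar B)) = F(\bar B)$ since the first $k$ variables are evaluated by the identities. So $\varphi$ is a bijection, and by the description above it is the map of (CM1).

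Finally I would check (CM2), $\Sigma_k\times\Sigma_\ell$-equivariance. Given $(\sigma,\tau)\in\Sigma_k\times\Sigma_\ell$, both sides of the square in Definition~\ref{def:enrichedmulti} are computed by composing with $\ev$ and inserting identities, and the $\Sigma_k$-action on $\uWaldCat(\bar\C;\D)$ was \emph{defined} to be permutation of the $\C$-variables --- the same action being used on the target side $\WaldCat_{k+\ell}(\bar\C,\bar\A;\D)$ --- while the $\Sigma_\ell$-action permutes the $\A$-variables on both sides in the obvious compatible way. Chasing an element $F$ around the square therefore reduces to the bookkeeping identity $\ev_{\bar\C_\sigma;\D}\circ(1_{C_{\sigma(1)}},\ldots,1_{C_{\sigma(k)}},\sigma\cdot(F\cdot\tau)) = \bigl(\ev_{\bar\C;\D}\circ(1_{C_1},\ldots,1_{C_k},F)\bigr)\cdot(\sigma\times\tau)$, which holds because $\ev$ is natural in its variables and the $\Sigma_k$-action on $\ev$ itself is just relabeling its first $k$ inputs. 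I expect the only real bookkeeping hazard to be keeping the $\Sigma_k$- and $\Sigma_\ell$-factors from interfering: since they act on disjoint blocks of variables and the action on $\uWaldCat$ matches the action on the first block of the target, no genuine obstacle arises, and (CM1) together with (CM2) gives that $\WaldCat$ is a closed multicategory.
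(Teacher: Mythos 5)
Your proposal follows the same overall approach as the paper: the bijection of (CM1) is given by composition with $\ev$, its inverse by partial application, and (CM2) is a bookkeeping check. One place where you are actually more careful than the paper's own text: the paper's final lemma only establishes that partial application of a $(k{+}\ell)$-exact $H$ at a fixed tuple of objects yields a $k$-exact functor --- that is, that $\widehat H$ is well-defined on objects --- and then tacitly assumes the rest (``assuming that the partial inverse is well-defined''), whereas you correctly flag that one must also check $\widehat H$ is $\ell$-exact as a functor into $\uWaldCat(\bar\C;\D)$. Your sketch of axiom (kE4) for $\widehat H$ is directionally right, but a bit quick: since the southern arrow of $[\bar g]_{\widehat H}$ is a pointwise-computed colimit, relating the resulting $(k{+}1)$-cubes in $\D$ to $[(\bar f,\bar g)]_H$ requires a cube-commutation argument in the style of Lemma~\ref{lem:composition-good} and Lemma~\ref{lem:pushout}, which neither you nor the paper spells out in full.
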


\section{$K$-Theory as an enriched multifunctor} \lbl{sec:KtheoryWald}

Our goal for this section is to show that the closed multicategory structure
on $\WaldCat$ is compatible with the $K$-theory functor.  

\begin{definition}
  A \textsl{multifunctor} $F:\M\rto \M'$ between symmetric multicategories is a
  function $F:\ob \M \rto \ob \M'$, and a function 
  \[\M(A_1,\ldots,A_k;B) \rto
  \M'(F(A_1),\ldots,F(A_k); F(B))\] for all tuples $B,A_1,\ldots,A_k$ of objects.
  These must preserve the units and be compatible with composition and the
  $\Sigma_k$-action.  In the case when $\M$ and $\M'$ are enriched over a
  symmetric monoidal $\V$, we
  just need 
  \[\M(A_1,\ldots,A_k;B) \rto \M'(F(A_1),\ldots,F(A_k); F(B))\] to be a
  $\V$-morphism. 
\end{definition}

First we consider the unenriched setting.  

\begin{proposition} \lbl{prop:Kmulti} The functor $K\colon \WaldCat\rto \Sp$ is
  a multifunctor.
\end{proposition}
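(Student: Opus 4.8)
The plan is to produce, from a $k$-exact functor $F\colon\C_1\timeses\C_k\rto\D$, a map of symmetric spectra $K(F)\colon K(\C_1)\smashes K(\C_k)\rto K(\D)$, and to check that this assignment respects units, the $\Sigma_k$-action, and composition. Since a symmetric spectrum map is really a compatible family of pointed maps at each level, the strategy is to realize everything levelwise and then observe that the structure maps match up. Concretely, for a $k$-exact functor $F$ I would first build, for each tuple $\vec m$, a functor
\[
wS^{(m_1)}_\dot\C_1\times\cdots\times wS^{(m_k)}_\dot\C_k \rto wS^{(m_1+\cdots+m_k)}_\dot\D,
\]
by applying $F$ to an object $(X_1,\ldots,X_k)$ of the product, where $X_i\colon\Ar\<\vec n_i\>\rto\C_i$, to obtain the functor on $\Ar(\<\vec n_1\>\timeses\<\vec n_k\>)$ sending $(j_1<i_1,\ldots,j_k<i_k)$ to $F(X_1(j_1<i_1),\ldots,X_k(j_k<i_k))$. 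The content of the earlier sections — precisely Axioms~(kE1)--(kE4), together with Proposition~\ref{prop:good-pushout}, Lemma~\ref{lem:pushout}, and Lemma~\ref{lem:codiag-1var} — is exactly what is needed to check that this target functor lands in $S^{(m_1+\cdots+m_k)}_\dot\D$: (kE1) forces the basepoint conditions $X(i=i)=*$, (kE2) and (kE4) guarantee that the relevant cubes built out of the $X_i$ remain good after applying $F$, and Lemma~\ref{lem:codiag-1var} ensures that $F$ commutes with the southern arrows that define the cofibration and goodness conditions. This is essentially the same bookkeeping already carried out in Section~5 for the internal hom, just applied to the multisimplicial $S_\dot$-nerves rather than to $\D$ directly.

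Next I would take geometric realizations of the $w$-nerves to get pointed spaces $|wS^{(m_1)}_\dot\C_1|\times\cdots\times|wS^{(m_k)}_\dot\C_k|\rto |wS^{(\sum m_i)}_\dot\D|$, and observe that since realization of multisimplicial sets commutes with finite products, this descends to a based map on the smash product. Letting each $m_i$ range gives, levelwise, a map of symmetric spectra $K(\C_1)\smashes K(\C_k)\rto K(\D)$ once one checks compatibility with the spectrum structure maps $S^1\smash|wS^{(p)}_\dot\C|\rto|wS^{(p+1)}_\dot\C|$; this compatibility is a naturality statement in the simplicial direction and follows because the structure maps of $K(\C)$ are induced by the inclusion of $1$-simplices, which $F$ respects by construction. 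The $\Sigma_k$-equivariance is built in: permuting the inputs of $F$ corresponds to permuting the smash factors, which is exactly the symmetric group action on the smash product of symmetric spectra, and the block-permutation on the $S^{(\sum m_i)}_\dot$-direction matches the symmetric spectrum structure on $K(\D)$. Finally, $K(1_\C)=1_{K(\C)}$ is immediate, and compatibility with multicategory composition reduces, via Lemma~\ref{lem:composition-good}, to the fact that applying $F\circ(G_1\timeses G_k)$ levelwise agrees with first applying the $G_i$ and then $F$ — this is the multifunctoriality of the levelwise construction, which holds on the nose.

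The main obstacle I expect is the careful verification that the levelwise target functor genuinely lands in the iterated $S_\dot$-construction, i.e. that all the goodness conditions are preserved. The subtlety is that an object of $S^{(m)}_\dot\C$ is not just a multisimplicial diagram but one satisfying the pushout/goodness conditions of the definition, and the cubes one must check good are cubes $X\circ I$ for $k$-cubes $I$ landing in $\Ar\<\vec n\>$; after applying $F$ these become cubes of the shape controlled by $[\bar f]_F$-type diagrams, so one must unwind that the goodness of $[\bar f]_F$ for all cofibration-tuples $\bar f$ (which is precisely Axiom~(kE4)) translates into goodness of all the relevant cubes in the multisimplicial target. This is where Proposition~\ref{prop:good-pushout} does real work, since the pushout-square condition (3) in the definition of $S_n\C$ must be preserved, and $F$ preserving pushouts in each variable (Axiom~(kE2)) plus the goodness of the ambient cube is exactly the hypothesis of that proposition. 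Once this is in place the rest is formal, and I would present it as such.
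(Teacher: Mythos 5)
Your overall strategy matches the paper's: apply the $k$-exact functor $F$ levelwise to the iterated $S_\dot$-construction to get functors $S^{(m_1)}_\dot\C_1\timeses S^{(m_k)}_\dot\C_k \rto S^{(\sum m_i)}_\dot\D$, realize the $w$-nerves, descend to the smash product, and assemble. But there is a genuine gap at exactly the point the paper singles out as the reason the proof is being written: the compatibility of the maps $\mu_{m_1,m_2}$ with the spectral structure maps of the symmetric spectra $K(\C_i)$ and $K(\D)$. You dismiss this as ``a naturality statement in the simplicial direction'' that ``follows because the structure maps of $K(\C)$ are induced by the inclusion of $1$-simplices, which $F$ respects by construction,'' but that is not an argument — it is precisely the claim that needs to be proved, and the paper explicitly notes that it could find no reference carrying it out. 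The paper does this work by defining the functors $\rho_{ni}\colon\C\rto S_n\C$ and the resulting simplicial functor $P\colon\C\times S^1\rto S_\dot\C$, verifying that $P$ is a well-defined map of simplicial categories (checking the identities $\partial_j\rho_{ni}(A)=\rho_{(n-1)(\partial_j i)}(A)$ and similarly for $s_j$), and then reducing the large coherence hexagon of structure maps to the pointwise identity $\rho_{ni}(F(A_1,A_2))=F(A_1,\rho_{ni}(A_2))=F(\rho_{ni}(A_1),A_2)$, which holds only because $F(*,-)=F(-,*)=*$ (Axiom~(kE1)/biexactness). This is not something ``$F$ respects by construction''; it is a nontrivial compatibility depending on (kE1) and on carefully tracking how the $S^1$-factor threads through the multisimplicial indexing.

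You also identify the wrong main obstacle. You anticipate that the hard part is verifying that the levelwise functor lands in $S^{(\sum m_i)}_\dot\D$ — that the goodness conditions are preserved — and you marshal Proposition~\ref{prop:good-pushout}, Lemma~\ref{lem:pushout}, and Lemma~\ref{lem:codiag-1var} for it. That landing statement is certainly true and does use (kE2) and (kE4), but the paper treats it in a single sentence as the ``key extra condition'' and moves on; the real content of the proof, the part requiring the auxiliary construction of $P$ and the commuting-hexagon check, is the structure-map coherence you wave away. So while your skeleton is the same as the paper's, you have filled in detail where the paper says little and said almost nothing where the paper does its actual work.
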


This statement is well-known to specialists, and has been mentioned in many
papers, including \cite{waldhausen83}, \cite[Theorem~2.6]{blumbergmandell11}, and
\cite{geisserhesselholt99}.  However, we could not find a reference that
explicitly checked that the multifunctor structure was compatible with the
structure maps of the symmetric spectra produced by $K$-theory, so in the
interest of completeness we present the proof here, as well.

Before starting the proof of this proposition we will first make an auxillary
construction.  In order to show that $K$ is a multifunctor we will need to show
that any $k$-exact functor $F\colon \C_1\timeses \C_k \rto \C$ gives rise to a
morphism $K(\C_1)\smashes K(\C_k) \rto K(\C)$.  When $k=0$ this says that a
choice of object $A\in \C$ gives a morphism $\S \rto K(\C)$.  To construct this,
choose an equivalence $\alpha:\S \rto K(\FinSet_*)$, and then take the exact
functor $p_A:\FinSet_* \rto \C$ given by $I \rgoesto \coprod_I A$.  Then
$K(p_A)\alpha$ is the desired morphism.  In the case $k=1$ this is just a
definition check to see that the definition of $K$-theory in
Definition~\ref{def:K} is functorial in $\C$.

Now consider $k>1$.  In the interest of simplifying the following analysis, we
will restrict our attention to the case when $k=2$; the higher cases follow
analogously. The data of a $2$-morphism is, for every pair $m_1,m_2$, a map
of spaces
\[\mu_{m_1,m_2}\colon K(\C_1)_{m_1}\smash K(\C_2)_{m_2} \longrto
K(\C)_{m_1+m_2}.\] These maps need to be coherent with respect to the spectral
structure maps; in particular, we need the following diagram to commute:
\begin{diagram}[-.7em]
  { K(\C_1)_{m_1}\smash K(\C_2)_{m_2} \smash S^1 & & K(\C_1)_{m_1} \smash S^1 \smash K(\C_2)_{m_2} \\
     & K(\C_1)_{m_1} \smash K(\C_2)_{m_2+1} & K(\C_1)_{m_1+1} \smash K(\C_2)_{m_2} \\
    K(\C)_{m_1+m_2} \smash S^1 & K(\C)_{m_1+m_2+1} & K(\C)_{m_1+1+m_2} \\}; \to{1-1}{1-3}
  \to{1-1}{3-1}_{\mu_{m_1,m_2}} \to{1-1}{2-2} \to{1-3}{2-3} \to{3-1}{3-2}
  \to{2-3}{3-3}^{\mu_{m_1+1,m_2}} \to{3-3}{3-2} \to{2-2}{3-2}^{\mu_{m_1,m_2+1}}
\end{diagram}

For a Waldhausen category $\C$ and $0\leq i \leq n$ we define a functor
$\rho_{ni}\colon \C \rto S_n \C$, which is defined on objects by
\[\rho_{ni}(A)_{jk} = \begin{cases} * \caseif j \leq n-i \hbox{ or } k\geq i, \\ A \caseotherwise \end{cases}\]
and extends in the analogous manner to morphisms.  Let $S^1$ be the pointed
simplicial set which at level $n$ is equal to the set $\{0,1,\ldots,n\}$; we can
also consider $S^1$ to be a pointed category with only trivial morphisms.  Then
we have a morphism of simplicial categories $P\colon \C \times S^1 \rto S_\dot
\C$ $(A,i) \rgoesto \rho_{ni}(A).$

\begin{lemma}
$P$ is a well-defined functor of simplicial categories.
\end{lemma}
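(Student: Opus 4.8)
The plan is to verify that $P\colon \C \times S^1 \rto S_\dot\C$ respects both the categorical structure in each simplicial level and the simplicial structure maps. First I would check that for each fixed $n$, the assignment $(A,i) \mapsto \rho_{ni}(A)$ actually lands in $S_n\C$: the defining diagram of $\rho_{ni}(A)$ sends $j=j$ to $*$ (since if $j \le n-i$ then $j \le n-i$, and otherwise $k=j \ge i$ forces $*$ as well), the maps $X(i'<j') \to X(i'<k')$ are all either identities on $A$, identities on $*$, or the zero map $* \to A$, hence cofibrations, and the relevant squares are pushout squares because they are either squares of identities or squares in which the only non-$*$ corner is a single copy of $A$. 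This is a routine but finite check on which entries of the staircase diagram equal $A$ and which equal $*$; the key point is that the ``$A$ region'' $\{(j,k) : j > n-i,\ k < i\}$ is closed downward in $j$ and upward-to-the-right in $k$ in the appropriate sense, so pushout squares never mix $A$ with $*$ in a way that fails exactness.

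Next I would check functoriality in the $\C$-variable: a morphism $g\colon A \to B$ in $\C$ induces $\rho_{ni}(g)$ levelwise, and this is clearly natural and composition-preserving since it is just $g$ or $1_*$ in each slot. Then I would check compatibility with the simplicial structure maps of $S^1$ and $S_\dot\C$. Since $S^1_n = \{0,1,\dots,n\}$ with only trivial morphisms, a simplicial operator $\theta\colon [m] \to [n]$ in $\Delta$ acts on $S^1$ by the induced order-preserving (pointed) map $\{0,\dots,n\} \to \{0,\dots,m\}$, and on $S_\dot\C$ by precomposition with $\Ar\theta\colon \Ar\<m\> \to \Ar\<n\>$. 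The content of the lemma here is the identity $\theta^*\bigl(\rho_{ni}(A)\bigr) = \rho_{m,\theta^*(i)}(A)$, i.e.\ that pulling back the staircase diagram along $\Ar\theta$ and then relabelling the index $i$ by $\theta$ give the same object of $S_m\C$. This follows by comparing, for each $j'<k'$ in $\<m\>$, whether $\theta(j') \le n-i$ or $\theta(k') \ge i$ against whether $j' \le m - \theta^*(i)$ or $k' \ge \theta^*(i)$; since $\theta^*$ on $S^1$ is defined precisely as the order-reversing-index transport that makes these two conditions equivalent, the entries match on the nose, and the same bookkeeping handles morphisms.

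The main obstacle I expect is purely notational rather than mathematical: the functor $\rho_{ni}$ is defined by a somewhat opaque case split on $j \le n-i$ versus $k \ge i$, and the two simplicial structures (on $S^1$, where $n$-simplices are an $(n{+}1)$-element set, and on $S_\dot\C$, where $n$-simplices involve the arrow category of $\<n\>$) transport the index $i$ in a way that is easy to get off-by-one wrong. So the real work is to pin down exactly how $\Delta$ acts on $S^1$ — namely that a face or degeneracy $\theta\colon[m]\to[n]$ sends the label $i \in \{0,\dots,n\}$ to the label in $\{0,\dots,m\}$ that makes the ``$*$-region'' of $\rho_{ni}(A)$ pull back to the ``$*$-region'' of $\rho_{m,\theta^*(i)}(A)$ — and then observe that with that (forced) definition the required naturality squares commute by inspection. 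Once the indexing conventions are fixed, every verification reduces to the observation that the region of the staircase diagram where $\rho_{ni}(A)$ takes the value $A$ is an up-set/down-set of the appropriate shape, so it is preserved by the monotone reindexing maps, and all squares involved are pushout squares for the trivial reason that at most one corner differs from $*$.
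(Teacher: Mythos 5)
Your overall strategy matches the paper's: first verify that each $\rho_{ni}(A)$ lands in $S_n\C$, then check compatibility with the simplicial structure. The paper dispatches the first point with ``true by definition,'' while you spell it out, and for the second the paper explicitly computes $\partial_j\rho_{ni}(A) = \rho_{(n-1)(\partial_j i)}(A)$ and $s_j\rho_{ni}(A) = \rho_{(n+1)(s_j i)}(A)$ case by case, whereas you phrase it as the single identity $\theta^*\rho_{ni}(A) = \rho_{m,\theta^*(i)}(A)$ for arbitrary simplicial operators $\theta$. That reformulation is a clean way to package the same check.

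One step in your verification does not go through as written. You justify $X(j=j)=*$ by asserting that if $j > n-i$ then $j \ge i$, but this implication is false whenever $i > (n+1)/2$: for instance with $n=2$, $i=2$, $j=1$ we have $j=1 > 0 = n-i$ yet $j=1 < 2 = i$, so the formula in the paper as printed would assign $A$ to the diagonal slot $(1,1)$, contradicting the first axiom for $S_n\C$. This strongly suggests the displayed formula for $\rho_{ni}$ contains a typo (the correct staircase should have $A$ at $(j,k)$ precisely when $j < i \le k$, equivalently $*$ when $j\ge i$ or $k<i$, which does make the diagonal vanish identically). Your qualitative description of the ``$A$-region as an up-set/down-set preserved by monotone reindexing'' is the right picture and carries the rest of the argument, but you should not claim to have verified the diagonal condition from the printed formula — rather, you should flag the inconsistency and work with the corrected formula.
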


\begin{proof}
  In order for $P$ to be well-defined we need to show that the image of $P$ is
  in $S_\dot \C$, and that $P$ is compatible with the simplicial maps.  The first
  part of this is true by definition, since $\rho_{ni}$ is constructed to be a
  valid element of $S_n\C$.  For the second part, note that we have
  \begin{align*}
    \partial_j \rho_{ni}(A) &= \begin{cases} \rho_{(n-1)0}(A) \caseif j=0 \hbox{
        and } i=n \hbox{ or } j=n\hbox{ and } i=1,\\\rho_{(n-1)i}(A) \caseif j
      \leq n-i\hbox{ and } i\neq n \\ \rho_{(n-1)(i-1)}(A) \caseif j >
      n-i \end{cases}\\ &= \rho_{(n-1)(\partial_j(i))}(A),
  \end{align*}
where in the right-hand side of the above, $i\in S^1_n$.  Analogously,
\[s_j \rho_{ni}(A) = \begin{cases} \rho_{(n+1)i} \caseif j \leq n-i \\ \rho_{(n+1)(i+1)} \caseif j > n-i\end{cases} = \rho_{(n-1)(s_j(i))}(A), \]
so we are done.
\end{proof}

We thus have functors
\[ P \colon S^{(m)}_\dot \C \times S^1 \rto S^{(m+1)}_\dot \C.\] By
definition, if either $i=0$ or $A = *$ then $P(A,i) = *$, so $P$ lifts to a map
\[P \colon NwS^{(m)}_\dot \C \smash S^1 \rto NwS^{(m+1)}_\dot \C.\] This is the
spectral structure map of the $K$-theory of a symmetric spectrum.

\begin{proof}[Proof of Proposition~\ref{prop:Kmulti}] 
  Consider a biexact functor $F\colon \C_1\times \C_2\rto \C$.  We want to use
  $F$ to construct morphisms $\mu_{m_1,m_2}\colon K(\C_1)_{m_1}\smash
  K(\C_2)_{m_2}\rto K(\C)_{m_1+m_2}$.  

  The key fact we need about the objects of $S_{n_1}\cdots S_{n_m} \C$ is that
  they will be preserved by biexact functors in the following manner.  Consider
  the composition
  \[\begin{array}{l}
    S^{(m_1)}_{\vec n_1}\C_1 \times S^{(m_2)}_{\vec n_2} \C_2 \rto \big[\Ar [\vec n_1], \C_1\big] \times \big[\Ar [\vec n_2], \C_2\big] \rto \\
    \qquad \big[\Ar([\vec n_1]\times [\vec n_2]), \C_2\times \C_2\big] \rto^{F\circ} \big [\Ar ([\vec n_1]\times [\vec n_2]), \C\big].
  \end{array}\]
  The key extra condition on the objects of $S^{(m_1+m_2)}_{\vec n_1\vec n_2}\C$ is that this functor lands in $S^{(m_1+m_2)}_{\vec n_1\vec n_2}\C$.  By varying the coordinates of $\vec n_1$ and $\vec n_2$ these assemble into exact functors
  \[ S^{(m_1)}_\dot \C_1 \times S^{(m_2)}_\dot \C_2 \rto S^{(m_1+m_2)}_\dot
  \C.\] Applying $|Nw\cdot|$ to these and noting that any point with the
  basepoint as one of the coordinates gets mapped to the basepoint, we get maps
  \[ \mu_{m_1,m_2} \colon K(\C_1)_{m_1} \smash K(\C_2)_{m_2} \rto
  K(\C)_{m_1+m_2}.\]

  In order to check these assemble into a map $K(\C_1)\smash K(\C_2) \rto K(\C)$
  we that these satisfy the coherence conditions stated earlier.  In order to
  show this, we will show that the following diagram commutes:
  \begin{diagram}[0em]
    {S_\dot^{(m_1)}\C_1 \times S_\dot^{(m_2)} \C_2 \times S^1 &  & S_\dot^{(m_1)}\C_1 \times S^1 \times S_\dot^{(m_2)} \C_2 \\
       & S_\dot^{(m_1)}\C_1\times S_\dot^{(m_2+1)} \C_2 & S_\dot^{(m_1+1)}\C_1 \times S_\dot^{(m_2)} \C_2 \\
      S_\dot^{(m_1+m_2)}\C \times S^1 & S_\dot^{(m_1+m_2+1)}\C & S_\dot^{(m_1+1+m_2)} \C \\}; \to{1-1}{1-3}
    \to{1-1}{3-1}_F \to{1-1}{2-2}^{1\times P} \to{1-3}{2-3}^{P\times 1}
    \to{3-1}{3-2}_P \to{2-3}{3-3}^F \to{3-3}{3-2} \to{2-2}{3-2}^F
  \end{diagram}
  In fact, all of the morphisms except for the two horizontal morphisms are
  obtained by postcomposing functors $\Ar ([\vec n_1] \times [\vec n_2]) \rto
  s\Cat$ with $P$ or $F$.  The horizontal morphisms, on the other hand, permute
  both source and target categories, and then permute the source categories
  back; everything in between is, once again, postcomposing with $P$ or $F$.
  Thus in order for this diagram to commute it suffices to show that the diagram
  \begin{diagram}
    {\C_1 \times \C_2 \times S^1 & & \C_1\times S^1 \times \C_2\\
      & \C_1\times S_\dot\C_2 & S_\dot\C_1\times \C_2 \\
      \C\times S^1  & S_\dot\C\\}; \to{1-1}{3-1}_{F\times 1} \to{1-1}{2-2}^{1\times
      P} \to{1-1}{1-3} \to{1-3}{2-3}^{P\times 1} \to{3-1}{3-2}_P
    \to{2-2}{3-2}^F \to{2-3}{3-2}^F
  \end{diagram}
  commutes.  Consider a triple $(A_1,A_2,i)\in W_1\times W_2\times S^1_n$.  To
  check that the diagram commutes, we need to show that
  \[\rho_{ni}(F(A_1,A_2)) = F(A_1,\rho_{ni}(A_2)) = F(\rho_{ni}(A_1), A_2).\]
  Looking at each of these at spot $jk$ we have that if $j\leq n-i$ or $k\geq
  i$, the first is $*$, the second is $F(A_1,*)$ and the third is $F(*,A_2)$,
  which are all equal because $F$ is biexact.  Otherwise, these are all equal to
  $F(A_1,A_2)$, so are again all equal.  So these diagrams commute on objects.
  Analogously, they commute on all morphisms.

  This completes the proof of Proposition~\ref{prop:Kmulti}.
\end{proof}

However, we still have not shown that $K$ is compatible with the closed
structure on $\WaldCat$.  However, as we have just shown that $K$ is in fact a
multifunctor, we can consider $\WaldCat$ to be enriched over $\Sp$ just by
applying $K$ to each of the internal hom-objects to produce a
spectrally-enriched category $\WaldCat_\Sp$.  We then have the following:

\begin{proposition}
  $K:\WaldCat_\Sp \rto \Sp$ is a spectrally-enriched multifunctor.
\end{proposition}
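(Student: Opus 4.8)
The plan is to promote the unenriched multifunctor statement of Proposition~\ref{prop:Kmulti} to a statement about the $\Sp$-enrichment coming from the internal homs of $\WaldCat$. By definition of $\WaldCat_\Sp$, the enriched hom-object is $K\big(\uWaldCat(\C_1,\ldots,\C_k;\D)\big)$, so what must be produced is, for every tuple $\C_1,\ldots,\C_k,\D$, a map of symmetric spectra
\[K\big(\uWaldCat(\C_1,\ldots,\C_k;\D)\big) \rto \Sp_k\big(K(\C_1),\ldots,K(\C_k);K(\D)\big),\]
where the target is the internal hom $k$-morphism object of $\Sp$, and then to check that this collection of maps is compatible with units, composition, and the $\Sigma_k$-action. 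The key point is that the map itself is, essentially, the adjoint of the map that Proposition~\ref{prop:Kmulti} already gives us: the evaluation $k{+}1$-morphism $\ev_{\C_1,\ldots,\C_k;\D}$ in $\WaldCat$ is a $(k{+}1)$-exact functor, so $K$ applied to it (using that $K$ is a multifunctor) yields a $(k{+}1)$-morphism
\[K(\C_1)\smashes K(\C_k)\smash K\big(\uWaldCat(\C_1,\ldots,\C_k;\D)\big) \rto K(\D)\]
in $\Sp$. Since $\Sp$ is a genuinely closed symmetric monoidal category, this transposes to the desired map $K\big(\uWaldCat(\bar\C;\D)\big)\rto \Sp_k(K(\bar\C);K(\D))$. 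The bulk of the work is to verify that this transpose is exactly the structure map required by the definition of an enriched multifunctor, i.e. that it intertwines the closed-multicategory composition of $\WaldCat$ with that of $\Sp$.

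First I would recall the closed-multicategory composition in both $\WaldCat$ and $\Sp$ and write down explicitly the compatibility square that an enriched multifunctor is required to make commute; both are determined by their evaluation morphisms via the universal property (CM1), so the square reduces to a statement about evaluation. Concretely, the composite of $\ev_{\uWaldCat(\bar\C;\D)}$ with $\ev_{\uWaldCat(\bar A;\uWaldCat(\bar\C;\D))}$ in $\WaldCat$ recovers $\ev_{\uWaldCat(\bar A,\bar\C;\D)}$ after the appropriate partial-application identification (the partial-inverse bijection established just before the lemma on $F(-,A_{\ell+1},\ldots,A_k)$). Applying the \emph{unenriched} multifunctor $K$ to this identity of $\WaldCat$-morphisms, and using that $K$ preserves multicategory composition (Proposition~\ref{prop:Kmulti}), gives the corresponding identity among the images in $\Sp$; transposing across the closedness of $\Sp$ then yields exactly the commuting square for the enriched structure maps. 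Unit compatibility is the $k=1$, trivial-composite case of the same argument, and $\Sigma_k$-equivariance follows because $K$ is $\Sigma_k$-equivariant on $k$-morphisms (again Proposition~\ref{prop:Kmulti}) and the transposition in $\Sp$ is equivariant by naturality of the $\smash$–$\mathrm{Hom}$ adjunction.

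The step I expect to be the main obstacle is the bookkeeping of the transposition together with the partial-application identification: one must be careful that the map $K\big(\uWaldCat(\bar\C;\D)\big)\rto \Sp_k(K(\bar\C);K(\D))$ obtained by transposing $K(\ev)$ really is a map \emph{of spectra} and not merely of spaces level by level — i.e. that it respects the spectral structure maps $P$ of Definition~\ref{def:K}. This is where the explicit analysis in the proof of Proposition~\ref{prop:Kmulti} is reused: the functor $P\colon S^{(m)}_\dot\C\times S^1\rto S^{(m+1)}_\dot\C$ and its interaction with postcomposition by exact functors is exactly what shows $\ev$ induces a map compatible with the structure maps, and the same diagram (now with $\ev$ in place of a generic $F$) gives the required compatibility after transposing. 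Once this is in place, the remaining checks — that the square built from evaluation morphisms propagates to the square built from the full composition laws, via (CM1) — are formal consequences of the universal property and require no further computation.
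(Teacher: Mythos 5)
Your proposal is correct and takes essentially the same route as the paper: apply the unenriched multifunctor $K$ to the $(k{+}1)$-exact evaluation $\ev_{\bar\C;\D}$ to get a map $K(\bar\C)\smash K(\uWaldCat(\bar\C;\D))\rto K(\D)$ in $\Sp$, then transpose across the closed structure of $\Sp$ to obtain the enriched structure map. The paper compresses the remaining verifications — unit, composition, and $\Sigma_k$-compatibility, which you spell out via (CM1) and the partial-application identification — into ``it follows directly,'' and your worry about the transpose being a map of spectra rather than merely levelwise is resolved automatically because the transposition is performed inside $\Sp$.
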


\begin{proof}
  We need to show that $K$ gives a morphism
  \[K(\uWaldCat(\C_1,\ldots,\C_k;\D)) \rto
  K(\D)^{K(\C_1)\smashes K(\C_k)}.\]  As $\Sp$ is closed symmetric monoidal, it suffices
  to show that we get a morphism $K(\C_1)\smashes K(\C_k) \smash
  K(\uWaldCat(\C_1,\ldots,\C_k;\D)) \rto K(\D)$.  The evaluation functor defined in
  Definition~\ref{def:evaluation} gives us such a morphism, and it follows
  directly that this produces the enrichment on $K$.
\end{proof}

For more on multifunctors between closed multicategories see
\cite[Section~3]{manzyuk12}.

\appendix

\section{Proof of Lemma~\ref{lem:pushout}} \lbl{app:technical}

For ease of reading we restate Lemma~\ref{lem:pushout} here. 

\begin{lemmapushout} 
  Suppose that we are given an indexing category $\A$ along with $n$
  subcategories $\A_1,\ldots,\A_n$ such that $\A = \bigcup_{i=1}^n \A_i$.  Let
  $F:\A \rightarrow \C$.  Then the southern arrow of the cube $I$ given by
  \[I(\bar \epsilon) = \colim F\big|_{\bigcap_{\epsilon_i= 0}
    \A_i}\qquad I(1) = \colim F\]
  is an isomorphism.
\end{lemmapushout}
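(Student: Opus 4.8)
The plan is to set up the indexing category for the cube $I$ explicitly and then identify its colimit (restricted to the non-top corner) with $\colim F$ by a cofinality argument, after which the southern arrow is an isomorphism by inspection. Write $\mathcal{P}$ for the poset $\I^n$ and $\mathcal{P}' = \mathcal{P} \setminus \{(1,\ldots,1)\}$. For $\bar\epsilon \in \mathcal{P}'$ set $S(\bar\epsilon) = \{ i : \epsilon_i = 0\}$, a nonempty subset of $\{1,\ldots,n\}$, and $\A_{\bar\epsilon} = \bigcap_{i \in S(\bar\epsilon)} \A_i$, so that $I(\bar\epsilon) = \colim F|_{\A_{\bar\epsilon}}$. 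Note $\bar\epsilon \leq \bar\delta$ in $\mathcal{P}'$ implies $S(\bar\delta) \subseteq S(\bar\epsilon)$, hence $\A_{\bar\epsilon} \subseteq \A_{\bar\delta}$, and these inclusions induce the structure maps of the cube; this is what makes $I$ a genuine functor $\mathcal{P} \to \C$ with $I(1) = \colim F$. The colimit defining the southern arrow is then $\colim_{\bar\epsilon \in \mathcal{P}'} I(\bar\epsilon)$, which by interchange of colimits is $\colim$ over the Grothendieck-type category $\mathcal{G}$ whose objects are pairs $(\bar\epsilon, a)$ with $\bar\epsilon \in \mathcal{P}'$ and $a \in \A_{\bar\epsilon}$, evaluated at $F(a)$.

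The key step is to show that the forgetful functor $\pi \colon \mathcal{G} \to \A$, $(\bar\epsilon, a) \mapsto a$, is cofinal (in the sense that each comma category $a \downarrow \pi$ is nonempty and connected), so that $\colim_{\mathcal{G}} F\circ\pi \cong \colim_\A F = I(1)$ and the southern arrow, being the canonical comparison map between these two descriptions of the same colimit, is an isomorphism. Nonemptiness is immediate: since $\A = \bigcup_i \A_i$, every $a \in \A$ lies in some $\A_i$, hence in $\A_{\bar\epsilon}$ for $\bar\epsilon$ the vertex with $\epsilon_i = 0$ and all other coordinates $1$; this gives an object of $a \downarrow \pi$. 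For connectedness: given two objects of $a \downarrow \pi$, coming from $\bar\epsilon$ and $\bar\delta$ in $\mathcal{P}'$ with $a \in \A_{\bar\epsilon} \cap \A_{\bar\delta}$, one checks $a \in \A_{\bar\epsilon \wedge \bar\delta}$ where $\bar\epsilon \wedge \bar\delta$ is the coordinatewise minimum — this is still in $\mathcal{P}'$ and satisfies $S(\bar\epsilon \wedge \bar\delta) = S(\bar\epsilon) \cup S(\bar\delta)$, so $\A_{\bar\epsilon \wedge \bar\delta} = \A_{\bar\epsilon} \cap \A_{\bar\delta} \ni a$ — and the two morphisms $\bar\epsilon \wedge \bar\delta \to \bar\epsilon$, $\bar\epsilon \wedge \bar\delta \to \bar\delta$ in $\mathcal{P}'$ lift to morphisms in $\mathcal{G}$ over $\A$ connecting the two chosen objects through $(\bar\epsilon \wedge \bar\delta, a)$. (One must also handle morphisms $a \to a'$ in $\A$, but the argument is the same: given objects over $\bar\epsilon$ at $a$ and a morphism downstream, push along and take minima.)

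I expect the main obstacle to be purely bookkeeping: making precise the interchange-of-colimits identification $\colim_{\bar\epsilon} \colim_{\A_{\bar\epsilon}} F \cong \colim_{\mathcal{G}} F\circ\pi$ and verifying carefully that the canonical map produced this way is literally the southern arrow of $I$ as defined (i.e. $\colim I' \to I(1)$), rather than just abstractly isomorphic to it. There is no real analytic or homotopical content — everything is a $1$-categorical colimit manipulation — so the risk is only in indexing errors and in keeping the variance of the $\A_i$-inclusions straight. Once the cofinality of $\pi$ is established, the conclusion is automatic, and the special case quoted in the body (with $\A = \I^n\setminus\{(1,\ldots,1)\}$, $\A_1 = \A\setminus\{(1,\ldots,1,0)\}$, $\A_2 = \I^{n-1}\times\{0\}$) falls out by unwinding the definitions of the three relevant colimits.
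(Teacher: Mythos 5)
Your cofinality approach is genuinely different from, and more conceptual than, the paper's. The paper proceeds by induction on $n$: the inductive step reduces to the $n=2$ case (applied twice, once to $I'$ and once to $F$), and the base case $n=2$ is handled by writing both colimits explicitly as coequalizers of coproducts and matching them term by term. Your argument handles all $n$ uniformly, trading the induction for the Grothendieck--Fubini interchange $\colim_{\bar\epsilon\in\mathcal{P}'}\colim_{\A_{\bar\epsilon}}F \cong \colim_{\mathcal{G}}(F\circ\pi)$ together with a finality check for $\pi\colon\mathcal{G}\to\A$. What the paper's route buys is self-containedness (it uses nothing beyond universal properties of coequalizers and pushouts); what yours buys is a cleaner, dimension-independent picture.

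One point in your connectedness check is left too vague to count as complete. An object of $a\downarrow\pi$ is a triple $((\bar\epsilon,b),f\colon a\to b)$ with $b\in\A_{\bar\epsilon}$ and $f$ a morphism in $\A$; your explicit argument treats only the case $b=a$, $f=\mathrm{id}_a$, and the parenthetical ``push along and take minima'' does not surface the one observation the general case actually needs. Namely: since $\A=\bigcup_i\A_i$ is a union of \emph{subcategories} (on morphisms, not just objects), every morphism $f\colon a\to b$ of $\A$ lies in some $\A_j$. Hence both $a$ and $b$ lie in $\A_j=\A_{\bar\eta}$ where $\bar\eta$ has $\eta_j=0$ and all other coordinates $1$, and $f$ is a morphism \emph{of} $\A_{\bar\eta}$. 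This produces the zigzag
\[
\bigl((\bar\eta,a),\mathrm{id}_a\bigr)\ \to\ \bigl((\bar\eta,b),f\bigr)\ \leftarrow\ \bigl((\bar\eta\wedge\bar\epsilon,b),f\bigr)\ \to\ \bigl((\bar\epsilon,b),f\bigr)
\]
in $a\downarrow\pi$, connecting an arbitrary object to an ``identity'' object $((\bar\eta,a),\mathrm{id}_a)$; your coordinatewise-minimum argument then connects any two identity objects. Without the observation that $f$ itself lives in some $\A_j$, you only establish connectedness of the full subcategory of objects lying over $\mathrm{id}_a$, which is not enough. With it, the argument closes: nonemptiness of $a\downarrow\pi$, the Grothendieck--Fubini step, and the identification of the resulting comparison map with the southern arrow $\colim I'\to I(1)$ are all correct as you have them.
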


\begin{proof}
  We want to show that $\colim I' \cong \colim F$; we will prove this by
  induction on $n$.  

  We begin by proving the inductive step.  Suppose that we know that this lemma
  is true for values lower than $n$.  Let $\A' = \bigcup_{i=1}^{n-1} \A_i$.  Let
  $\B = \I^{n} \backslash \{(0,1,\ldots,1),(1,\ldots,1)\}$.  By the $n=2$ case
  of the lemma applied to $I'$,
  \begin{diagram}
    {\colim I'\big|_{\B \cap (\I^{n-1}\times\{0\})} & \colim
      I'\big|_{\I^{n-1}\times \{0\}} \\
      \colim I'\big|_{\B} & \colim I' \\};
    \arrowsquare{}{}{}{}
  \end{diagram}
  is a pushout square.  By the $n-1$ case of the lemma, the upper-left corner is
  $\colim F\big|_{\A'\cap \A_n}$, and the lower-left corner is $\colim
  F\big|_{\A'}$.  The upper-right corner is just $I'(1,\ldots,1,0) = \colim
  F\big|_{\A_n}$.  Rewriting this, we see that 
  \begin{diagram}
    {\colim F\big|_{\A'\cap \A_n} & \colim F\big|_{\A_n} \\
      \colim F\big|_{\A'} & \colim I'\\};
    \arrowsquare{}{}{}{}
  \end{diagram}
  is a pushout square.  On the other hand, by the $n=2$ case of the lemma
  applied directly to $F$,
  \begin{diagram}
    {\colim F\big|_{\A'\cap \A_n} & \colim F\big|_{\A_n} \\
      \colim F\big|_{\A'} & \colim F\\};
    \arrowsquare{}{}{}{}
  \end{diagram}
  is also a pushout square.  Thus $\colim I' \cong \colim F$, as desired.

  The base case is $n=2$. In particular, we want to show that
  \begin{diagram}
    {\colim F\big|_{\A_1\cap \A_2} & \colim F\big|_{\A_1} \\ \colim
      F\big|_{\A_2} & \colim F\\};
    \arrowsquare{}{}{}{}
  \end{diagram}
  is a pushout square. Let $X$ be the pushout of the upper-left part of the
  square.  We have
  \begin{eqnarray*}
    \colim F &\cong& \coeq \biggl(\coprod_{f\in \A} \dom f \rrto \coprod_{A\in
      \A} A \biggr) \\
    &\cong& \colim
    \left(\begin{inline-diagram}
        {|[text height=1em]|\coprod_{f\in \A_1\cap \A_2} \dom f &|[text height=1em]| \coprod_{A\in \A_1\cap \A_2\phantom{f}} A \\
          |[text height=1em]|\coprod_{f\in \A_1\cup \A_2} \dom f & |[text height=1em]|\coprod_{A\in \A_1 \cup \A_2} A
          \\};
        \to{1-1.east}{1-2.west}_s \to{1-1.mid east}{1-2.mid west}^t
        \to{2-1.east}{2-2.west}_s \to{2-1.mid east}{2-2.mid west}^t
        \to{1-1.260}{2-1.100}_L \to{1-1.280}{2-1.80}^R
        \to{1-2.260}{2-2.100}_L \to{1-2.280}{2-2.80}^R 
      \end{inline-diagram}\right) \\
    &\cong& \coeq
    \left(\begin{inline-diagram}
        { |[text height=1em]| \coeq \biggl(\coprod_{f\in \A_1\cap \A_2} \dom f &
          |[text height=1em]|
          \coprod_{A\in \A_1\cap \A_2} A\biggr) \\
          |[text height=1em]| \coeq \biggl(\coprod_{f\in \A_1\cup \A_2} \dom f &
          |[text height=1em]| \coprod_{A\in \A_1 \cup \A_2} A\biggr)
          \\};
        \to{1-1.east}{1-2.west}_s \to{1-1.mid east}{1-2.mid west}^t
        \to{2-1.east}{2-2.west}_s \to{2-1.mid east}{2-2.mid west}^t
        \to{1-1.south east}{2-1.north east}_L \to{1-2.south west}{2-2.north west}^R
      \end{inline-diagram}\right) \\
    &\cong& \coeq \left(\colim F\big|_{\A_1\cap \A_2} \rrto \colim F\big|_{\A_1}
      \amalg \colim F\big|_{\A_2} \right) \\
    &\cong& X,
  \end{eqnarray*}
  as claimed.  Here, $s$ is the morphism which takes $\dom f$ to itself and $t$
  is the morphism given by $f$ on the component indexed by $f$.  $L$ includes
  $\A_1\cap \A_2$ into $\A_1$, and $R$ includes $\A_1\cap \A_2$ into $\A_2$.

\end{proof}

\bibliographystyle{alpha}
\bibliography{IZ-all}

\end{document}